\newtheorem{theo}{Theorem}[section]
\newtheorem{propo}[theo]{Proposition}
\newtheorem{lema}[theo]{Lemma}
\newtheorem{coro}[theo]{Corollary}
\newfont{\nset}{msbm10}
\def\D{{\mbox {\boldmath $D$}}}
\def\F{{\cal F}}
\def\matrix0{{\mbox {\boldmath $O$}}}
\def\j{{\mbox{\boldmath $j$}}}
\def\m{{\mbox{\boldmath $m$}}}
\def\n{{\mbox{\boldmath $n$}}}
\def\r{{\mbox{\boldmath $r$}}}
\def\v{{\mbox{\boldmath $v$}}}
\def\w{{\mbox{\boldmath $w$}}}
\def\vec0{\mbox{\bf 0}}
\def\vecdelta{{\mbox{\boldmath $\delta$}}}
\def\vectheta{{\mbox{\boldmath $\theta$}}}
\def\dist{\mathop{d }\nolimits}
\def\dist{\mathop{\rm dist }\nolimits}
\begin{document}

\title{Moments in Graphs}

\author{C. Dalf\'{o}, M.A. Fiol, E. Garriga \\
         Departament de Matem\`{a}tica Aplicada IV \\
         Universitat Polit\`{e}cnica de Catalunya, BarcelonaTech\\
         \texttt{\{cdalfo,fiol,egarriga\}@ma4.upc.edu}}

\date{}

\maketitle

\begin{abstract}
Let $G$ be a connected graph with vertex set $V$ and a {\em weight function} $\rho$ that assigns a nonnegative number to each of its vertices. Then, the {\em $\rho$-moment} of $G$ at vertex $u$ is defined to be
$
M_G^{\rho}(u)=\sum_{v\in V} \rho(v)\dist (u,v)
$,
where $\dist(\cdot,\cdot)$ stands for the distance function. Adding up all these numbers, we obtain the {\em $\rho$-moment of $G$}:
$$
M_G^{\rho}=\sum_{u\in V}M_G^{\rho}(u)=\frac{1}{2}\sum_{u,v\in V}\dist(u,v)[\rho(u)+\rho(v)].
$$
This parameter generalizes, or it is closely related to, some well-known graph invariants, such as the {\em Wiener index} $W(G)$, when $\rho(u)=1/2$ for every $u\in V$, and the {\em degree distance} $D'(G)$, obtained when $\rho(u)=\delta(u)$, the degree of vertex $u$.

In this paper we derive some exact formulas for computing the $\rho$-moment of a graph obtained by a general operation called graft product, which can be seen as a generalization of the hierarchical product,  in terms of the corresponding $\rho$-moments of its factors. As a consequence, we provide a method for obtaining nonisomorphic graphs with the same $\rho$-moment for every $\rho$ (and hence with equal mean distance, Wiener index, degree distance, etc.). In the case when the factors are trees and/or cycles, techniques from linear algebra
allow us to give formulas for the degree distance of their product.
\end{abstract}

\noindent{\em Keywords:} Graph, Adjacency matrix, Graft product, Moment, Topological index.
\\
\noindent{\em 2010 MSC:} 05C50 (05C90, 92E10).

\section{Preliminaries}

In general {\em graphs invariants} based on the distance between vertices (in chemistry, called {\em topological indices}) have found
many applications in chemistry, since they give interesting correlations with physical, chemical and thermodynamic properties of molecules.
Some well-known examples are
the {\em Wiener index}  $W(G)$ (introduced by Wiener \cite{w47});
the first and second {\em Zagreb indices} $M_i(G)$ (Gutman and Trinajsti\'{c} \cite{gt80}, Zhou\cite{z04b,zg04}),
the {\em degree distance} (Dobrynin and Kotchetova \cite{dk94} and Gutman \cite{g94})
and the {\em molecular topological index} ${\rm MTI}(G)$ (proposed by Schultz \cite{sh89}).

Some results computing these indices for some graph operations (such as the Cartesian product, the join or the composition) and characterizing extremal cases have been given, among others, by
Bucicovschi and Cioab\u{a} \cite{bc08}, Eliasi and Taeri \cite{et09},
Khalifeh, Yousefi-Azari, Ashrafi, and Wagner \cite{kya08,kyaw09}, I. Tomescu \cite{t99}, A.I. Tomescu \cite{t08}, Yeh and Gutman \cite{yg94}, and Zhou \cite{z04a,z04b}. In particular, Stevanovi\'c \cite{s01} computed the so-called Wiener polynomial of a graph, from which the Wiener and hyper-Winer \cite{rt93b} indices are retrieved. Moreover, it is worth mentioning that some of these indices are closely related. For instance,
Klein, Mihali\'c, Plav\u{s}i\'c, and Trinajsti\'c \cite{kmpt92} proved that, when $G$ is a tree, there is a linear relation between ${\rm MTI}(G)$ and $W(G)$.
(See also Gutman \cite{g94,g02} for the study of other relations.)

As a generalization of most of the above indices, we define here the  $\rho$-moment of a graph by giving some weights to its vertices.
Then, we derive some exact formulas for computing the $\rho$-moment of a graph obtained by a general operation called `graft product', which can be seen as an extension of the hierarchical product \cite{bcdf09},  in terms of the corresponding $\rho$-moments of its factors. As a consequence, we provide a method for obtaining nonisomorphic graphs
that have the same $\rho$-moment for every $\rho$. In the case when the factors are trees and/or cycles, algebraic techniques
(distance matrices, eigenvalues, etc.) allow us to give formulas for the degree distance of their product. The remaining of this section is devoted to give some basic definitions and concepts on which our work relies.

\subsection{Graphs and moments}


Let $G$ be a (simple and finite) connected graph with vertex set $V=V(G)$, $n=|V|$ vertices and consider a {\em weight function}
$\rho : V\rightarrow [0,+\infty)$
that assigns a nonnegative number to each of its vertices. In particular, the {\em degree function} $\delta$ assigns to every vertex its degree.  The {\em $\rho$-moment}
of $G$ at a given vertex $u$ is defined as
$$
M_G^{\rho}(u)=\sum_{v\in V} \rho(v)\dist (v,u),
$$
where $\dist(\cdot,\cdot)$ stands for the {\em distance function}.
Adding up all these numbers, we obtain the {\em $\rho$-moment of $G$}:
\begin{eqnarray*}
M_G^{\rho} & = & \sum_{u\in V}M_G^{\rho}(u)
            =  \sum_{u\in V}\sum_{v\in V} \rho(v)\dist (v,u) \\
           & = &\sum_{v\in V} \rho(v) \sum_{u\in V} \dist (v,u)
            =  \frac{1}{2}\sum_{u,v\in V}\dist(u,v)[\rho(u)+\rho(v)].
\end{eqnarray*}

This parameter generalizes, or it is closely related to, some well-known graph invariants,
such as the following:
\begin{itemize}
  \item The {\em mean distance} $d(G)$ of $G$ is obtained when $\rho(u)=1$ for each $u\in V$:
$$
d(G)=\frac{1}{n^2}\sum_{u,v\in V}\dist(u,v)=\frac{1}{n^2}M_G^{1}.
$$
  \item The  {\em Wiener index} $W(G)$ \cite{w47} corresponds to the case $\rho(u)=1/2$ for every $u\in V$:
$$
W(G)=\frac{1}{2}\sum_{u,v\in V}\dist(u,v)=M_G^{1/2}.
$$
  \item The  {\em degree distance} $D'(G)$ proposed by Dobrynin and Kotchetova \cite{dk94} (see also Gutman\cite{g94} where it was denoted $S(G)$, I. Tomescu \cite{t99}, and A.I. Tomescu \cite{t08}) is obtained when $\rho(u)=\delta(u)$ for every $u\in V$, where $\delta$ stands for the {\em degree function}:
$$
D'(G) = \frac{1}{2}\sum_{u,v\in V}\dist(u,v)[\delta(u)+\delta(v)]=M_G^{\delta}.
$$
  \item The {\em Schultz index}, or {\em molecular topological index} MTI$(G)$ \cite{sh89}, is obtained by adding up the first {\em Zagreb index} $M_1(G)$ \cite{gt80}, which is the sum of the squares of the degrees and the degree distance:
$$
{\rm MTI}(G)=\sum_{u\in V}\delta(u)^2 + M_G^{\delta}.
$$
\end{itemize}

\subsection{The graft product}
As commented, our aim here is to obtain some exact formulas for computing the $\rho$-moment
of a graph, obtained by a `general' operation, which is defined as follows:
 Given the connected graphs $H$; $K_1,\ldots, K_r$  with
respective disjoint vertex sets $V_H$; $V_1,\ldots, V_r$
and some (root) vertices $x_i\in V_H$, $y_i\in V_i$, $i=1,\ldots,r$, the {\em graft product}
\begin{equation}\label{graft product}
G=H\left(
\begin{array}{ccc}
x_1 & \cdots & x_r \\
y_1 & \cdots & y_r
\end{array}
\right)(K_1,\ldots,K_r)
\end{equation}
is obtained by identifying vertices $x_i$ and $y_i$ for every $i=1,\ldots,r$, as it is represented in Figure \ref{fig1}.

\begin{figure}[t]
\begin{center}
\vskip-1cm
\includegraphics[width=12cm]{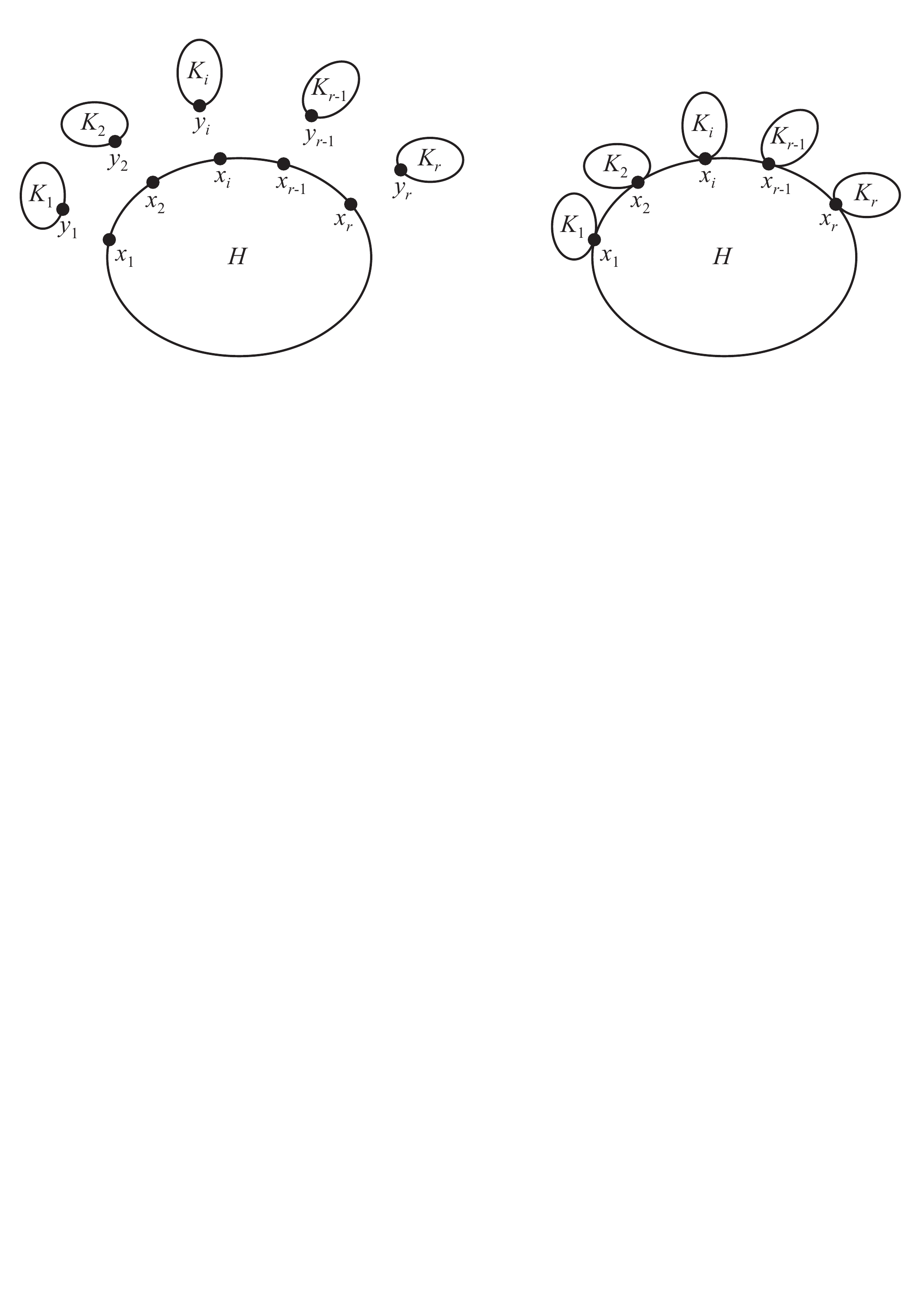}
\end{center}
\vskip -12.5cm
\caption{The graft product of graphs}
\label{fig1}
\end{figure}

Moreover, if $H;K_1,\ldots,K_r$ have weight functions $\alpha;\beta_1,\ldots,\beta_r$ respectively, we denote by $\gamma=\alpha+\beta_1+\cdots+\beta_r$ the weight function of their graft product $G$ defined as
$$
\gamma=\left\{
\begin{array}{ll}
\alpha(x) & \mbox{if } x\in H, \ x\neq x_i, \ 1\leq i\leq r,\\
\alpha(x_i)+\beta_i(x_i) & \mbox{for } 1\leq i\leq r,\\
\beta_i(y) & \mbox{if } y\in K_i, \ y\neq y_i, \ 1\leq i\leq r.
\end{array}
\right.
$$

In particular, when $r=1$, the so-called {\em coalescence} $H\!\cdot\! K$ of the `rooted graphs' $(H,x)$ and $(K,y)$ corresponds to the graft product
$H\!\cdot\! K=H{x\choose y}K$, which has been
studied in other contexts. For instance, Schwenk \cite{sch74} related the characteristic polynomial of $H\!\cdot\! K$ in terms of the characteristic polynomials of  $H$, $H-x$, $K$, and  $K-y$. Namely,
$$
\phi(H\!\cdot\!K)=\phi(H)\phi(K-y)+\phi(K)\phi(H-x)-x\phi(H-x)\phi(K-y).
$$
Then, by applying iteratively this formula, we can calculate the characteristic polynomial of a (general) graft product.

Another particular case in which the characteristic polynomial was studied is when $r=|V_H|$. In this case, we obtain the so-called
rooted product $H(K)$, where $K$ stands for the sequence $K_1,\ldots,K_r$ (for more details see Godsil and McKay \cite{GoMK78}).

\section{Main result}
In this section we derive our main result which gives a formula for computing the moment of a graft product in terms of the moments of its components.

\begin{theo}
\label{basic-theo}
Let $H$; $K_1,\ldots,K_r$ be graphs with respective (disjoint) vertex sets $V_H$; $V_1,\ldots,V_r$, weight functions
$\alpha; \beta_1,\ldots, \beta_r$  and (total) weights
$A=\sum_{u\in V_H}\alpha(u)$, $B_i=\sum_{v\in V_i}\beta_i(v)$, $i=1,\ldots, r$, and $B=\sum_{i=1}^r B_i$.
Then, the moment $M_G^{\gamma}$, where $G=(V,E)$ is the graft product (\ref{graft product}), with order $|V|=|V_H|+|V_1|+\cdots+|V_r|-r$, weight function $\gamma=\alpha+\beta_1+\cdots+\beta_r$ and weight $W=A+B$, is
\begin{eqnarray*}
M_G^{\gamma} & = & M_{H}^{\alpha}+\sum_{i=1}^r M_{K_i}^{\beta_i}
              +  \sum_{i=1}^r M_H^{\xi_i}(x_i)+ \sum_{i=1}^r M_{K_i}^{\eta_i}(y_i) \\
             & + & \sum_{i,j=1}^r(|V_i|-1)\dist(x_i,x_j)B_j,
\end{eqnarray*}
where
$\xi_i=(|V_i|-1)\alpha+B_i$
and
$\eta_i=(|V|-|V_i|)\beta_i+W-B_i$ for $i=1,\ldots,r$.
\end{theo}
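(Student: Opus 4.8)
The plan is to compute $M_G^{\gamma}=\sum_{v\in V}\gamma(v)\sum_{u\in V}\dist(v,u)$ directly, exploiting the additive structure of the weight function together with the way distances decompose in the graft product. The essential structural input is that each identified vertex $x_i=y_i$ is a cut vertex separating $V_i\setminus\{y_i\}$ from the rest of $G$; hence $H$ and every $K_i$ embed isometrically in $G$, and any shortest path joining vertices in different factors is forced through the roots. Writing $\dist$ for the distance in $G$ and $\dist_H,\dist_{K_i}$ for the distances in the factors, this yields the four cases $\dist(v,u)=\dist_H(v,u)$ for $u,v\in V_H$; $\dist(v,u)=\dist_{K_i}(v,u)$ for $u,v\in V_i$; $\dist(v,u)=\dist_{K_i}(v,y_i)+\dist_H(x_i,u)$ for $v\in V_i,\,u\in V_H$; and $\dist(v,u)=\dist_{K_i}(v,y_i)+\dist_H(x_i,x_j)+\dist_{K_j}(y_j,u)$ for $v\in V_i,\,u\in V_j$ with $i\neq j$. (In particular $\dist(x_i,x_j)=\dist_H(x_i,x_j)$, so the cross term in the statement may be read in $H$.) Throughout I write $D_F(w)=\sum_{u}\dist_F(w,u)$ for the transmission of a vertex $w$ in a graph $F$.

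Next I would extend $\alpha$ and each $\beta_i$ by zero to all of $V$, so that $\gamma=\alpha+\sum_{i=1}^r\beta_i$ holds as an identity of functions on $V$; the only delicate point, the identified vertex, is automatic since there $\alpha(x_i)+\beta_i(y_i)=\gamma(x_i)$. By linearity $M_G^{\gamma}=\sum_{v,u}\alpha(v)\dist(v,u)+\sum_{i=1}^r\sum_{v,u}\beta_i(v)\dist(v,u)$, and each piece is evaluated by substituting the distance decomposition and splitting the inner sum over $u$ according to the factor containing $u$. In the $\alpha$-part one uses $\sum_{u\in V_j\setminus\{y_j\}}\dist_{K_j}(y_j,u)=D_{K_j}(y_j)$ (adjoining $u=y_j$ costs nothing) to obtain $M_H^{\alpha}+\sum_j(|V_j|-1)M_H^{\alpha}(x_j)+A\sum_j D_{K_j}(y_j)$.

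For a fixed $i$ and $v\in V_i$, summing $\dist(v,u)$ over all $u$ produces $D_{K_i}(v)$ from $u\in V_i$, a contribution $(|V_H|-1)\dist_{K_i}(v,y_i)+D_H(x_i)$ from $u\in V_H\setminus\{x_i\}$, and $\sum_{j\neq i}\big[(|V_j|-1)\dist_{K_i}(v,y_i)+(|V_j|-1)\dist_H(x_i,x_j)+D_{K_j}(y_j)\big]$ from the remaining branches. The crucial step is that the total coefficient of $\dist_{K_i}(v,y_i)$ collapses, via $\sum_{j=1}^r(|V_j|-1)=|V|-|V_H|$, to $(|V_H|-1)+\sum_{j\neq i}(|V_j|-1)=|V|-|V_i|$, precisely the coefficient appearing in $\eta_i$. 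Summing against $\beta_i(v)$ and using $\sum_{v\in V_i}\beta_i(v)=B_i$ gives the $i$-th $\beta$-term as $M_{K_i}^{\beta_i}+(|V|-|V_i|)M_{K_i}^{\beta_i}(y_i)+B_iD_H(x_i)+B_i\sum_{j\neq i}(|V_j|-1)\dist_H(x_i,x_j)+B_i\sum_{j\neq i}D_{K_j}(y_j)$.

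Finally I would reassemble and match. From the definitions, $M_H^{\xi_i}(x_i)=(|V_i|-1)M_H^{\alpha}(x_i)+B_iD_H(x_i)$ and $M_{K_i}^{\eta_i}(y_i)=(|V|-|V_i|)M_{K_i}^{\beta_i}(y_i)+(W-B_i)D_{K_i}(y_i)$, so the recombination reduces to two bookkeeping identities: the transmission contributions merge as $A\,D_{K_j}(y_j)+\big(\sum_{i\neq j}B_i\big)D_{K_j}(y_j)=(W-B_j)D_{K_j}(y_j)$, which supplies the second half of $\eta_j$; and the inter-branch terms reindex, by symmetry of $\dist$, as $\sum_i B_i\sum_{j\neq i}(|V_j|-1)\dist_H(x_i,x_j)=\sum_{i,j}(|V_i|-1)B_j\dist(x_i,x_j)$, the stated cross term (the diagonal vanishing since $\dist(x_i,x_i)=0$). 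I expect the main obstacle to be not any single computation but this bookkeeping in the $\beta$-part: one must track which endpoint lies inside $K_i$ and which outside, avoid double-counting the shared vertex $x_i=y_i$, verify the collapse of the $\dist_{K_i}(v,y_i)$-coefficient to $|V|-|V_i|$ that produces $\eta_i$, and correctly fuse the root-transmission terms $D_{K_j}(y_j)$ arising from the $\alpha$-part and from the off-diagonal $\beta$-parts into the single weight $W-B_j$.
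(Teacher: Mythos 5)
Your proof is correct, and it reaches the theorem by a genuinely different decomposition than the paper's. The paper computes $M_G^{\gamma}=\sum_{u\in V}M_G^{\gamma}(u)$ by grouping the \emph{evaluation} vertices $u$ into $V_H$ and the sets $V_i$; since each root $x_i=y_i$ then lies in two groups, a third step is required in which $\sum_{i=1}^r M_G^{\gamma}(x_i)$ is computed and subtracted (an inclusion--exclusion correction), and the coefficient $|V|-|V_i|$ in $\eta_i$ only emerges at the very end, assembled from four separate terms ($|V_H|+\sum_k|V_k|-|V_i|-r$) scattered across the three steps. You instead split on the \emph{weight} side: extending $\alpha$ and the $\beta_i$ by zero makes $\gamma=\alpha+\sum_i\beta_i$ an exact identity on $V$ (precisely because of how the paper defines $\gamma$ at the roots), so $M_G^{\gamma}=M_G^{\alpha}+\sum_i M_G^{\beta_i}$ by linearity, and you then partition the inner sum over $u$ via the disjoint decomposition $V=V_H\sqcup\bigcup_j\bigl(V_j\setminus\{y_j\}\bigr)$, so no double counting ever arises and no subtraction step is needed; the correction is replaced by the trivial observations that $u=x_i$ or $u=y_j$ contributes zero distance. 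The computational core is the same in both arguments --- the distance decomposition through the cut vertices, the collapse of the coefficient of $\dist_{K_i}(v,y_i)$ to $|V|-|V_i|$, the fusing of the root transmissions into the constant $W-B_j$, and the symmetrization of the cross term with vanishing diagonal --- and all the identities you invoke check out, e.g.\ $M_H^{\xi_i}(x_i)=(|V_i|-1)M_H^{\alpha}(x_i)+B_i\sum_{u\in V_H}\dist_H(x_i,u)$ and $M_{K_i}^{\eta_i}(y_i)=(|V|-|V_i|)M_{K_i}^{\beta_i}(y_i)+(W-B_i)\sum_{u\in V_i}\dist_{K_i}(y_i,u)$. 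What your route buys is a cleaner ledger (one pass, no overlap to repair) and an explicit statement of the metric facts the paper uses tacitly (isometric embedding of the factors, shortest paths forced through the roots); what the paper's route buys is explicit formulas for the per-vertex moments $M_G^{\gamma}(v)$, $M_G^{\gamma}(v_i)$, $M_G^{\gamma}(x_i)$ along the way, which are of some independent interest. As a side benefit, your bookkeeping never uses that the receptor vertices $x_1,\ldots,x_r$ are distinct, which is exactly the relaxation the paper later needs for Theorem \ref{teorema4.1}.
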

\begin{proof}
We compute the moment $M_G^{\gamma}$ in three steps:

$(i)$ The moment of a vertex $v$ in $V_H$ is
$$
M_G^{\gamma}(v)
=\sum_{w\in V_H}\alpha(w)\dist(w,v)+\sum_{i=1}^r\sum_{w_i\in V_i}\beta_i(w_i)[\dist(w_i,x_i)+\dist(x_i,v)].
$$
Then,  by adding up for all $v\in V_H$, we get
\begin{eqnarray}
\sum_{v\in V_H}M_G^{\gamma}(v) & = & M_H^{\alpha}+|V_H|\sum_{i=1}^r M_{K_i}^{\beta_i}(x_i)+ \sum_{v\in V_H}\sum_{i=1}^r \dist(x_i,v)\sum_{w_i\in V_i} \beta_i(w_i)\nonumber\\
 & = & M_H^{\alpha}+|V_H|\sum_{i=1}^r M_{K_i}^{\beta_i}(x_i)+ \sum_{i=1}^r B_i\sum_{v\in V_H} \dist(x_i,v) \nonumber\\
 & = &
 \label{(i)} M_H^{\alpha}+|V_H|\sum_{i=1}^r M_{K_i}^{\beta_i}(x_i)+ \sum_{i=1}^r M_H^{B_i}(x_i).
\end{eqnarray}

$(ii)$ The moment of a vertex $v_i$ in $V_i$ is
\begin{eqnarray*}
M_G^{\gamma}(v_i) & = &
\sum_{w_i\in V_i}\beta_i(w_i)\dist(w_i,v_i)
+\sum_{w\in H}\alpha(w)[\dist(w,x_i)+\dist(x_i,v_i)] \\
& + & \sum_{j\neq i}\sum_{w_j\in V_j}\beta_j(w_j)[\dist(w_j,x_j)+\dist(x_j,x_i)+\dist(x_i,v_i)] \\
& = & \sum_{j=1}^r\sum_{w_j\in V_j}\beta_j(w_j)[\dist(w_j,x_j)+\dist(x_j,x_i)+\dist(x_i,v_i)] \\
& - & \sum_{w_i\in V_i} \beta_i(w_i)[\dist(w_i,x_i)+\dist(x_i,v_i)]+ \sum_{w_i\in V_i}\beta_i(w_i)\dist(w_i,v_i)\\
& + & \sum_{w\in V_H}\alpha(w)\dist(w,x_i)+\sum_{w\in V_H}\alpha(w)\dist(x_i,v_i)\\
& = & M_H^{\alpha}(x_i)+M_{K_i}^{\beta_i}(v_i)+\sum_{j=1}^r M_{K_j}^{\beta_j}(x_j)- M_{K_i}^{\beta_i}(x_i)\\
& + & (A+B-B_i)\dist(x_i,v_i)+ \sum_{j=1}^r B_j\dist(x_i,x_j).
 \end{eqnarray*}
Adding up first for all $v_i\in V_i$ and then for all $i=1,\ldots,r$, we get
\begin{eqnarray*}
\sum_{v_i\in V_i} M_G^{\gamma}(v_i) & = &
|V_i| M_H^{\alpha}(x_i)+M_{K_i}^{\beta_i}+|V_i|\sum_{j=1}^r M_{K_j}^{\beta_j}(x_j)- |V_i|M_{K_i}^{\beta_i}(x_i)\\
 & + & M_{K_i}^{A+B-B_i}(x_i)+ |V_i|\sum_{j=1}^r B_j\dist(x_i,x_j);
 \end{eqnarray*}
 \begin{eqnarray}
\sum_{i=1}^r\sum_{v_i\in V_i} M_G^{\gamma}(v_i) & = &
\sum_{i=1}^r |V_i| M_H^{\alpha}(x_i)+\sum_{i=1}^r M_{K_i}^{\beta_i}+\sum_{i=1}^r|V_i|\sum_{j=1}^r M_{K_j}^{\beta_j}(x_j)\nonumber\\
 & - & \sum_{i=1}^r |V_i|M_{K_i}^{\beta_i}(x_i)+
 \sum_{i=1}^r M_{K_i}^{A+B-B_i}(x_i)\nonumber\\
 &+& \label{(ii)}\sum_{i,j=1}^r |V_i| B_j\dist(x_i,x_j).
 \end{eqnarray}

$(iii)$ The vertices $x_i$ appear in both expressions (\ref{(i)}) and (\ref{(ii)}). Thus, we must compute their moments in order to subtract them from the total computation:
$$
M_G^{\gamma}(x_i) = \sum_{w\in V_H}\alpha(w)\dist(w,x_i)+\sum_{j=1}^r\sum_{w_j\in V_j}\beta_j(w_j)[\dist(w_j,x_j)+\dist(x_j,x_i)].
$$
Adding up for all $i=1,\ldots,r$,
\begin{eqnarray}
\sum_{i=1}^r M_G^{\gamma}(x_i) & = & \sum_{i=1}^r\sum_{w\in V_H}\alpha(w)\dist(w,x_i)+r\sum_{j=1}^r\sum_{w_j\in V_j}\beta_j(w_j)\dist(w_j,x_j)\nonumber \\
& + & \sum_{i,j=1}^r\dist(x_i,x_j)\sum_{w_j\in V_j}\beta_j(w_j)\nonumber \\
 & = & \label{(iii)}
 \sum_{i=1}^r M_H^{\alpha}(x_i)+r\sum_{j=1}^r M_{K_j}^{\beta_j}(x_j)+\sum_{i,j=1}^r B_j\dist(x_i,x_j).
\end{eqnarray}
Finally, from (\ref{(i)}), (\ref{(ii)}), and (\ref{(iii)}) we have
\begin{eqnarray*}
M_G^{\gamma} & = & \sum_{v\in V_H} M_G^{\gamma}(v)+\sum_{i=1}^r\sum_{v_i\in V_i} M_G^{\gamma}(v_i)-
\sum_{i=1}^r M_G^{\gamma}(x_i) \\
 & = & M_H^{\alpha}+\sum_{i=1}^r M_{K_i}^{\beta_i}+ \sum_{i=1}^r  |V_i| M_H^{\alpha}(x_i)
   -  \sum_{i=1}^r M_H^{\alpha}(x_i)+ \sum_{i=1}^r  M_H^{B_i}(x_i) \\
 & + & |V_H|\sum_{i=1}^r M_{K_i}^{\beta_i}(x_i)+
 \sum_{k=1}^r|V_k|\sum_{i=1}^r M_{K_i}^{\beta_i}(x_i)
    -   \sum_{i=1}^r |V_i| M_{K_i}^{\beta_i}(x_i)\\
 & - & r\sum_{i=1}^r M_{K_i}^{\beta_i}(x_i) +\sum_{i=1}^r M_{K_i}^{A+B-B_i}(x_i)+ \sum_{i,j=1}^r (|V_i|-1)B_j\dist(x_i,x_j)\\
 & = &
M_{H}^{\alpha}+\sum_{i=1}^r M_{K_i}^{\beta_i} +  \sum_{i=1}^r M_H^{(|V_i|-1)\alpha+B_i}(x_i)\\
 & + & \sum_{i=1}^r M_{K_i}^{(|V|-|V_i|)\beta_i+W-B_i}(y_i)
              +  \sum_{i,j=1}^r(|V_i|-1)B_j\dist(x_i,x_j),
\end{eqnarray*}
 which corresponds to our result.
\end{proof}

\section{Some consequences}
To discuss some consequences of the above result, let us consider some particular cases of the graft product.

\subsection{The flower graph}
The {\em  flower graph} is obtained when
 $H=x$ (a singleton). Then, $M_H^{\rho}=M_H^{\rho}(x)=0$ for any $\rho$ and Theorem \ref{basic-theo} gives
$$
M_G^{\gamma}  =  \sum_{i=1}^r M_{K_i}^{\beta_i}
              +  \sum_{i=1}^r M_{K_i}^{\eta_i}(y_i),
$$
where
$$
\eta_i=\left(\sum_{j\neq i}|V_j| -r+ 1\right)\beta_i+\alpha +\sum_{j\neq i}B_j,
$$
for $i=1,\ldots,r$.

\subsection{Graphs from permutations}
Another particular case is the family of so-called {\em graphs from permutations}, which are defined as follows:
Let $H=(V_H,\alpha,E_H)$ and $K_i=K=(V_K,\beta,E_K)$ for $i=1,\ldots,r$, with $V_H=\{x_1,\ldots,x_r\}$ and $V_K=\{y_1,\ldots,y_r\}$. Let $\sigma$ be a permutation of the indices $1,\ldots,r$ and consider the graph
$$
G_{\sigma}=H\left(
\begin{array}{ccc}
x_1 & \cdots & x_r \\
y_{\sigma(1)} & \cdots & y_{\sigma(r)}
\end{array}
\right)(K,\ldots,K).
$$
Then, in this case, Theorem \ref{basic-theo} yields:
\begin{coro}
\label{coro-sigma}
Let $G_{\sigma}$ be defined as above. Then,
$$
M_{G_{\sigma}}^{\gamma}  =  rM_H^{\alpha}+r^2M_K^{\beta}+r BM_{H}^{1}
              +  (A+(r-1)B) M_{K}^{1}.
$$
\end{coro}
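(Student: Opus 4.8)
The plan is to specialize Theorem~\ref{basic-theo} to the situation at hand, where all factors $K_i$ coincide with a fixed graph $K$, the host graph $H$ has exactly $r$ vertices all serving as roots, and the root identification on the $K$-side is governed by a permutation $\sigma$. The key observation making this tractable is that $V_H=\{x_1,\dots,x_r\}$ consists precisely of the root vertices, so every vertex of $H$ is an $x_i$; this collapses several of the $\dist(x_i,x_j)$ and moment-at-root terms into global moments $M_H^{1}$ and $M_K^{1}$. First I would record the simplifications forced by the hypotheses: $|V_i|=|V_K|=|V_H|=r$ for all $i$, so $|V|=r\cdot r - r + r = r^2$ (each copy of $K$ shares its root with $H$), whence $|V|-|V_i| = r^2-r = r(r-1)$; moreover $\beta_i=\beta$ and $B_i=B_K:=\sum_{v\in V_K}\beta(v)$ are the same for every $i$, and $A=\sum_{u\in V_H}\alpha(u)$, $B=\sum_{i=1}^r B_i = rB_K$.

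Next I would substitute these into the four groups of terms in Theorem~\ref{basic-theo}. The first two terms give $M_H^{\alpha}+\sum_{i=1}^r M_{K}^{\beta}=M_H^{\alpha}+rM_K^{\beta}$ immediately. For the third term, $\xi_i=(|V_i|-1)\alpha+B_i=(r-1)\alpha+B_K$, and since this weight is the \emph{same} for every $i$, the sum $\sum_{i=1}^r M_H^{\xi_i}(x_i)$ becomes $\sum_{i=1}^r M_H^{(r-1)\alpha+B_K}(x_i)$; by linearity of the moment in the weight function, $M_H^{(r-1)\alpha+B_K}(x_i)=(r-1)M_H^{\alpha}(x_i)+B_K\,M_H^{1}(x_i)$, and summing over $i$ (recalling $V_H=\{x_1,\dots,x_r\}$) yields $(r-1)M_H^{\alpha}+B_K\,M_H^{1}$. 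Here one must check the bookkeeping that $\sum_{i}M_H^{\alpha}(x_i)=M_H^{\alpha}$ and $\sum_i M_H^{1}(x_i)=M_H^{1}$, which holds exactly because the roots exhaust $V_H$. Since $B=rB_K$, this term equals $(r-1)M_H^{\alpha}+\tfrac{B}{r}M_H^{1}$; multiplying through carefully against the target $rBM_H^{1}$ coefficient will be the place to watch the arithmetic. The analogous computation for the fourth term uses $\eta_i=(|V|-|V_i|)\beta_i+W-B_i=r(r-1)\beta+(A+B-B_K)$ and the linearity $M_K^{\eta_i}(y_i)=r(r-1)M_K^{\beta}(y_i)+(A+B-B_K)M_K^{1}(y_i)$; summing over $i$ and using $\sum_i M_K^{\beta}(y_i)=M_K^{\beta}$, $\sum_i M_K^{1}(y_i)=M_K^{1}$ (the $y_i$ exhaust $V_K$) gives $r(r-1)M_K^{\beta}+(A+B-B_K)M_K^{1}$.

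Finally I would handle the cross term $\sum_{i,j=1}^r(|V_i|-1)\dist(x_i,x_j)B_j=(r-1)B_K\sum_{i,j}\dist(x_i,x_j)$, and identify $\sum_{i,j=1}^r\dist(x_i,x_j)=\sum_{i=1}^r M_H^{1}(x_i)=M_H^{1}$ since the roots are all of $V_H$; thus this term is $(r-1)B_K\,M_H^{1}$. The last step is to collect everything and combine the three separate $M_H^{1}$ contributions (from the third term and the cross term) and the $M_K^{\beta}$ contributions (from the second and fourth terms), using $B=rB_K$ throughout to convert $B_K$ into $B/r$ and verify the stated coefficients $rB$ on $M_H^{1}$ and $A+(r-1)B$ on $M_K^{1}$, while the $M_K^{\beta}$ terms assemble as $rM_K^{\beta}+r(r-1)M_K^{\beta}=r^2M_K^{\beta}$ and the $M_H^{\alpha}$ terms as $M_H^{\alpha}+(r-1)M_H^{\alpha}=rM_H^{\alpha}$. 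I expect the main obstacle to be purely the coefficient-tracking in this last consolidation: one must be scrupulous that the $B_K$ inside the third term (giving $B_K M_H^1 = \tfrac{B}{r}M_H^1$) combines with the cross term $(r-1)B_K M_H^1$ to produce exactly $rB\,M_H^1$, which requires the identities $\sum_i M_H^1(x_i)=M_H^1$ and $|V|=r^2$ to have been applied consistently; no genuinely new idea is needed beyond linearity of $M^{\rho}$ in $\rho$ and the fact that summing a root-moment over \emph{all} roots reproduces the full moment when the roots exhaust the vertex set.
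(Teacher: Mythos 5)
Your derivation follows exactly the paper's route: specialize Theorem~\ref{basic-theo}, use linearity of $M^{\rho}$ in $\rho$, exploit that the roots exhaust $V_H$ and $V_K$ (so summed root-moments give full moments, and $\sum_{i,j}\dist(x_i,x_j)=M_H^1$), and note $\sum_i M_K^{\eta}(y_{\sigma(i)})=\sum_i M_K^{\eta}(y_i)$, which is precisely where the $\sigma$-independence enters. All of your intermediate computations are correct: the four groups of terms come out as $M_H^{\alpha}+rM_K^{\beta}$, then $(r-1)M_H^{\alpha}+B_K M_H^1$, then $r(r-1)M_K^{\beta}+(A+(r-1)B_K)M_K^1$, and the cross term $(r-1)B_K M_H^1$, exactly as in the paper's proof.

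However, your final consolidation step, as written, is false under your own convention. You define $B=\sum_{i=1}^r B_i=rB_K$ (the theorem's convention) and then claim that $B_K M_H^1+(r-1)B_K M_H^1$ ``produces exactly $rB\,M_H^1$.'' It does not: the sum is $rB_K M_H^1=B\,M_H^1$, which is off from $rB\,M_H^1=r^2B_K M_H^1$ by a factor of $r$, and no amount of applying $\sum_i M_H^1(x_i)=M_H^1$ or $|V|=r^2$ changes that; similarly your $(A+(r-1)B_K)M_K^1$ does not equal $(A+(r-1)B)M_K^1$ with $B=rB_K$. The resolution is notational, not mathematical: in Corollary~\ref{coro-sigma} the paper silently reuses $B$ for the \emph{common per-copy weight} $B_i=\sum_{v\in V_K}\beta(v)$, not for the total $\sum_i B_i$. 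This is visible in the paper's own proof, which writes $\xi=(r-1)\alpha+B$ and $\eta=A+(r-1)B+(r^2-r)\beta$ (i.e.\ $B_i=B$ and $W=A+rB$), and in the subsequent application $(b)$, where $B=2m_K$ is the degree-weight of a single copy of $K$. Once $B$ is read this way, your computed coefficients $rB_K$ on $M_H^1$ and $A+(r-1)B_K$ on $M_K^1$ are exactly the stated ones, and your proof is complete and identical in substance to the paper's. So the gap is a misidentification of the symbol $B$ in the statement, which led you to assert an arithmetic match that fails by a factor of $r$; you even flagged this spot as the place to watch, but then asserted the wrong resolution instead of questioning the convention.
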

\begin{proof}
With the notation of Theorem \ref{basic-theo}, we have
\begin{eqnarray*}
\xi_i(=\xi)=(|V_i|-1)\alpha + B_i & = & (r-1)\alpha + B, \\
\eta_i(=\eta) = W-B_i+(|V|-|V_i|)\beta_i & = & A + (r-1)B+(r^2-r)\beta.
\end{eqnarray*}
Hence,
\begin{eqnarray*}
&&\sum_{i=1}^r M_H^{\xi}(x_i)  = (r-1)\sum_{i=1}^r M_H^{\alpha}(x_i)+ B \sum_{i=1}^r M_H^{1}(x_i)
=(r-1)M_H^{\alpha}+ B M_H^{1}, \\
&&\sum_{i=1}^r M_{K}^{\eta}(y_{\sigma(i)}) = \sum_{i=1}^r M_{K}^{\eta}(y_{i}) = (A+(r-1)B)M_K^{1}+(r^2-r)M_K^{\beta},\\
&&\sum_{i,j=1}^r(|V_i|-1)\dist(x_i,x_j)B_j = (r-1)B\sum_{i,j=1}^r\dist(x_i,x_j)=(r-1)BM_H^1.
\end{eqnarray*}
Then,
\begin{eqnarray*}
M_{G_{\sigma}}^{\alpha+\beta+\cdots+\beta} & = & M_{H}^{\alpha}+r M_{K}^{\beta}+(r-1)M_H^{\alpha}+ B M_H^{1}+ (A+(r-1)B)M_K^{1}\\
 & + & (r^2-r)M_K^{\beta}+(r-1)BM_H^1\\
             & = & rM_H^{\alpha}+r^2M_K^{\beta}+r BM_{H}^{1}
              +  (A+(r-1)B) M_{K}^{1},
\end{eqnarray*}
as claimed.
\end{proof}

Consequently, we have that {\em the moment of $G_{\sigma}$ is independent of the permutation $\sigma$}.
This allows us to obtain nonisomorphic graphs with the same $\rho$-moment. Before giving an example of this fact, let us consider two interesting particular cases of Corollary \ref{coro-sigma}:
\begin{itemize}
\item[$(a)$]
If $\alpha=0$ and $\beta=1$ (constant), then $\gamma=1$ and Corollary \ref{coro-sigma} yields
\begin{equation}\label{permut-1}
 M^1_{G_{\sigma}}=r^2M_K^1+r^2 M_H^1+(r-1)rM_K^1=r^2 M_H^1+r(2r-1)M_K^1.
\end{equation}
Consequently, we get that the mean distance of $G_{\sigma}$ is
$$
d(G_{\sigma})=d(H)+\left(2-\frac{1}{r}\right)d(K)\quad \stackrel{r\rightarrow \infty}{\longrightarrow} \quad d(H)+2d(K).
$$
\item[$(b)$]
If $\alpha=\beta=\delta$ (the degree function), then also $\gamma=\delta$, and Corollary \ref{coro-sigma} gives that the degree distance of $G_{\sigma}$ is
\begin{equation}\label{permut-d}
 M^{\delta}_{G_{\sigma}}=rM_{H}^{\delta}+r^2M_K^{\delta}+2rm_K M_H^1+2(m_H+(r-1)m_K)M_K^1,
\end{equation}
where $m_H$ and $m_K$ stand for the size (number of edges) of $H$ and $K$, respectively.
\end{itemize}

Now, to give an example of non isomorphic graphs with the same $\rho$-moment, let us consider the graphs $H,K$ shown in  Figure \ref{fig2-moments}, with moments:
\begin{itemize}
\item
$M_H^{1}=2(1+1+2)+2(1+1+1)=14$,\quad  $M_H^{\delta}=2(3+3+4)+2(2+2+3)=34$,
\item
$M_K^{1}=2(1+2+3)+2(1+1+2)=20$,\quad $M_K^{\delta}=2(2+4+3)+2(1+2+2)=28$.
\end{itemize}

Then, we can choose three permutations $\sigma_i$  leading to the nonisomorphic graphs $G_{\sigma_i}$, $i=1,2,3$, shown in Figure~\ref{fig2-moments}, whose common moments with respect to $\gamma=\alpha+\beta+\beta+\beta+\beta$ turn out to be:
\begin{itemize}
\item
If $\alpha=0$ and $\beta=1$,
$$
M_{G_{\sigma_i}}^{1}=16\cdot 14+28\cdot 20 = 784, \quad\mbox{and}\quad d(G_{\sigma_i})=49/16, \quad i=1,2,3.
$$
\item
If $\alpha=\delta$ and $\beta=1$,
$$
M_{G_{\sigma_i}}^{\gamma}=4\cdot 34+16\cdot 20+4\cdot 4\cdot 14+(10+3\cdot 4)20 = 1120, \quad i=1,2,3.
$$
\item
If $\alpha=\beta=\delta$,
$$
M_{G_{\sigma_i}}^{\delta}=4\cdot 34+16\cdot 28+4\cdot 6\cdot 14+(10+3\cdot 6)20 = 1520, \quad i=1,2,3.
$$
\end{itemize}

Thus, in particular, the three graphs $G_{\sigma_i}$ have a common mean distance, Wiener index (since $W(G)^{1/2}=\frac{1}{2}M_G^{1}$), and degree distance.

\begin{figure}[t]
\begin{center}
\vskip-1cm
\includegraphics[width=12cm]{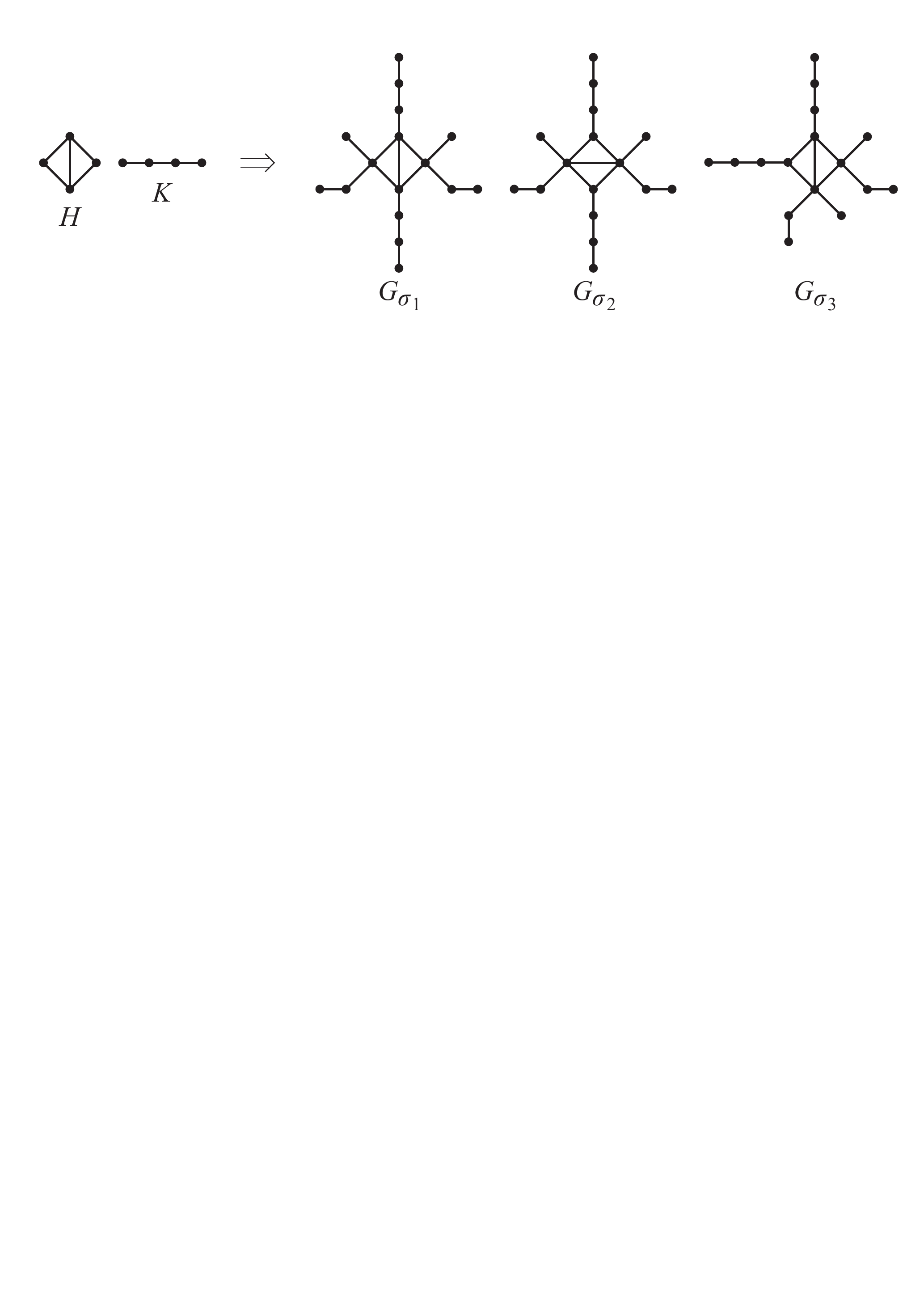}
\end{center}
\vskip -13cm
\caption{Three nonisomorphic graphs with the same $\rho$-moment}
\label{fig2-moments}
\end{figure}

\subsection{The partial hierarchical product}
Another family of interesting graphs are those obtained through the {\em partial hierarchical product} which is defined as follows:
Given the graphs $H$ and ($r$ copies of) $K$, and the vertices $x_1,\ldots,x_r\in V_H$, $y\in V_K$, consider the graph
$$
G_1=H\left(
\begin{array}{ccc}
x_1 & \cdots & x_r \\
y & \cdots & y
\end{array}
\right)(K,\ldots,K),
$$
see $G_1$ in Figure \ref{fig3-moments}.

In particular, when $r=|V_H|$, $G_1$ turns out to be the {\em hierarchical product $G_1=H \sqcap K$}, introduced by Barri\`ere, Comellas, Dalf\'o, and Fiol in \cite{bcdf09}, with vertex set $V_H\times V_K$ and adjacencies
$$
x_i y_j \sim
\left\{
\begin{array}{ll}
x_i y_k & \mbox{if $y_k\sim y_j$ in $K$}, \\
x_k y_j & \mbox{if $x_k\sim x_j$ in $H$ and $y_j=y$}.
\end{array}
\right.
$$

This is a spanning subgraph of the well-known direct (or Cartesian) product $H \square K$.
Moreover, $K_2\sqcap K_2\sqcap \stackrel{(n)}{\cdots}\sqcap K_2$ is the so-called {\em $n$-th binomial tree}, which is well-known in computer science as a model for data structures.

In a recent paper, Eliasi and Iranmanesh \cite{ei11} computed the hyper-Wiener index \cite{rt93b}, defined as $WW(G)=\frac{1}{2}W(G)+\frac{1}{2}M_1(G)$, of the `generalized hierarchical product' of graphs \cite{bdfm09}. (In fact, in \cite{bdfm09} the probabilistic method was used for computing the mean distance, and hence the Wiener index, of such a product.)

\begin{figure}[t]
\begin{center}
\vskip-1cm
\includegraphics[width=13cm]{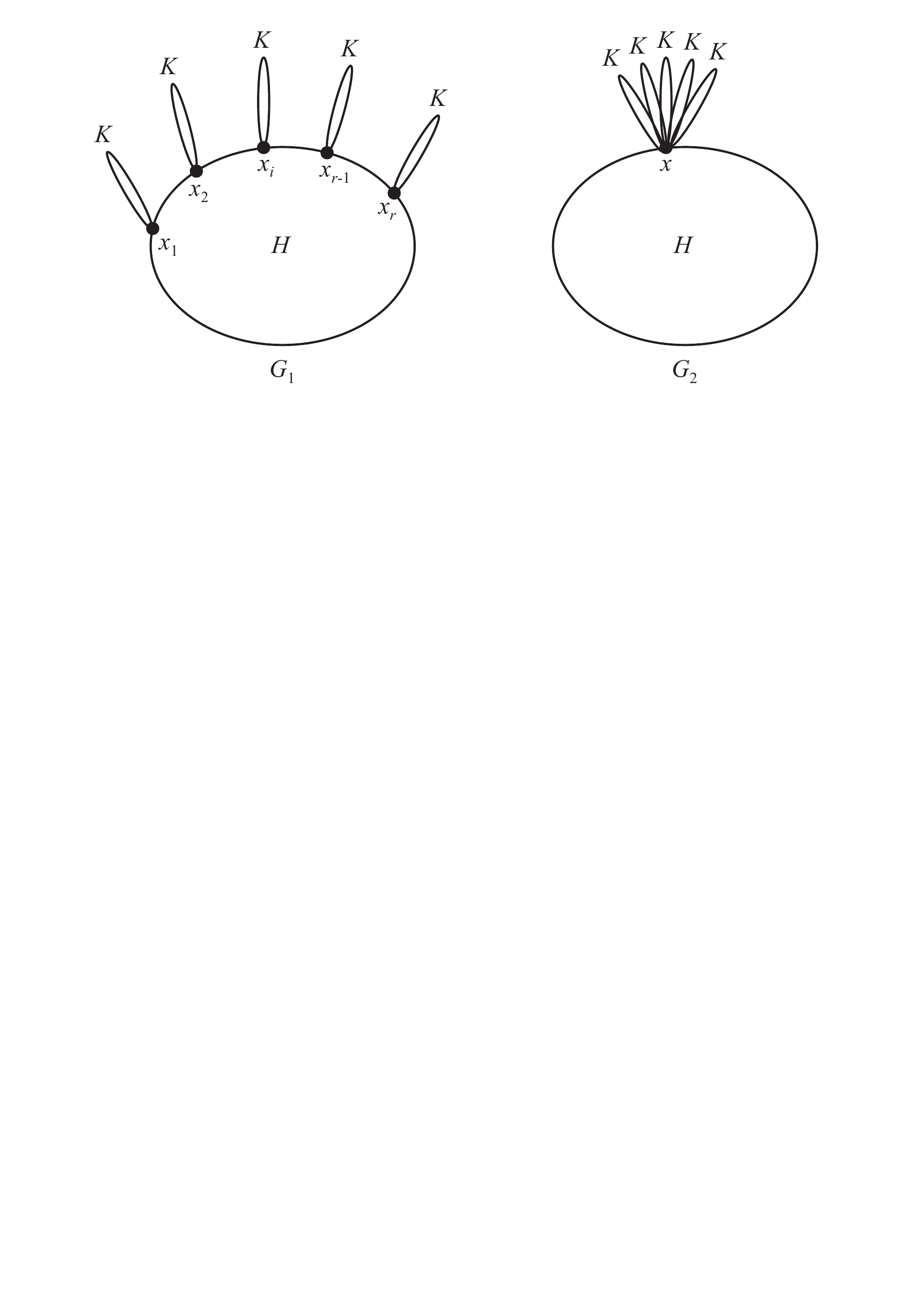}
\end{center}
\vskip -13cm
\caption{Two moment-related graft products}
\label{fig3-moments}
\end{figure}

\subsection{Comparison between moments}
Let $H$, $K$ be graphs with respective weight functions $\alpha$, $\beta$. Let $\gamma=\alpha+\beta+\stackrel{(r)}{\cdots}+\beta$. Consider $G_1$ defined as before and the particular case when $x_1=x_2=\cdots=x_r=x$, that is,
$$
G_2=H\left(
\begin{array}{ccc}
x & \cdots & x \\
y & \cdots & y
\end{array}
\right)(K,\ldots,K),
$$
see $G_2$ in Figure \ref{fig3-moments}.

\begin{figure}[t]
\begin{center}
\vskip-1cm
\includegraphics[width=16cm]{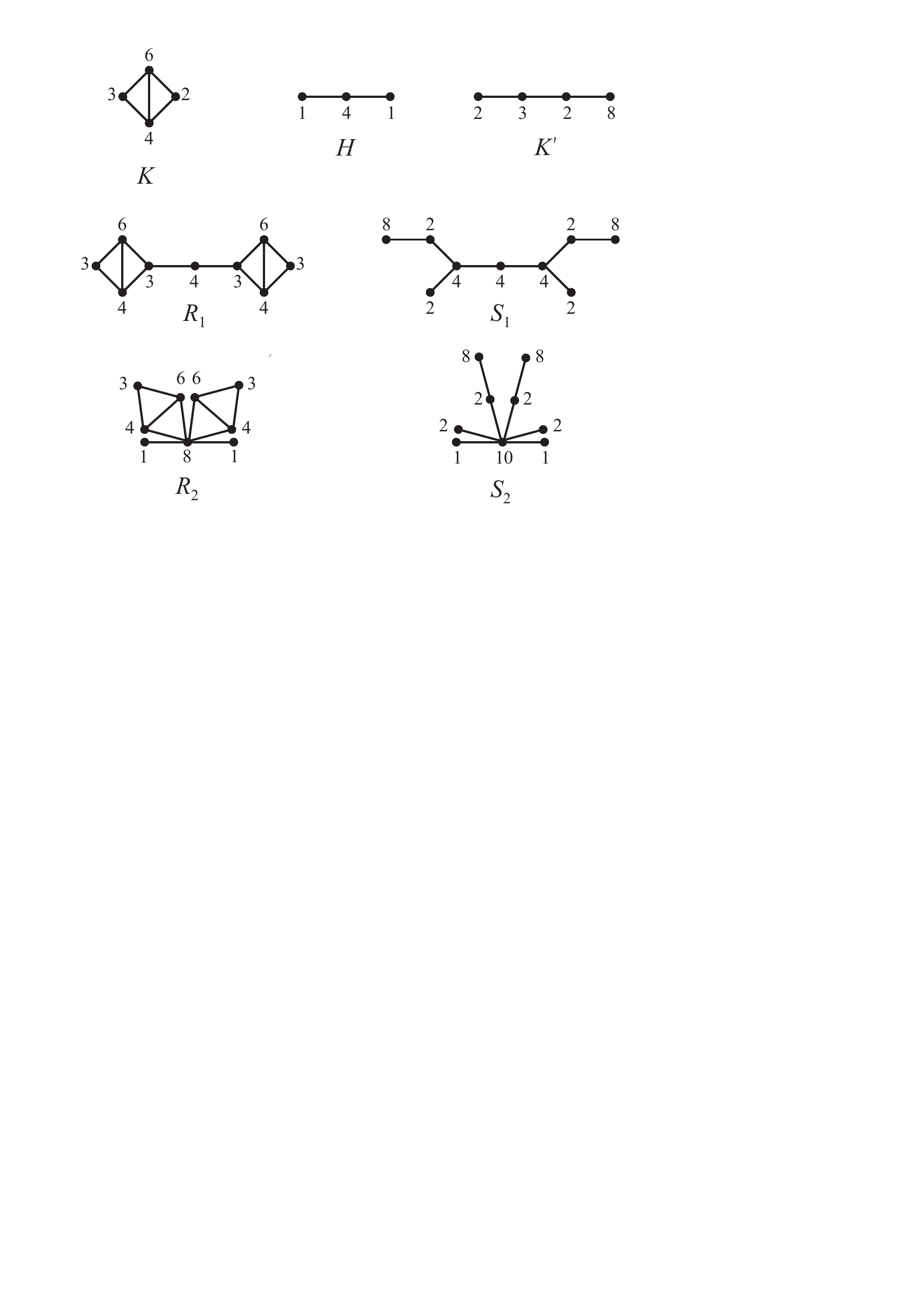}
\vskip -13.75cm
\caption{A comparison between moments}
\label{fig4-moments}
\end{center}
\end{figure}

Then we have the following result:
\begin{coro}
The difference of the moments of the graphs $G_1$ and $G_2$ defined above, both with weight function $\gamma=\alpha+\beta+\cdots+\beta$, satisfies
$$
M_{G_2}^{\gamma}-M_{G_1}^{\gamma}=\sum_{i=1}^r[M_H^{\xi}(x)-M_H^{\xi}(x_i)]-B(|V_K|-1)\sum_{i,j=1}^r \dist(x_i,x_j),
$$
where $\xi=(|V_K|-1)\alpha+B$.
\end{coro}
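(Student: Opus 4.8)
The plan is to apply Theorem \ref{basic-theo} twice, once to $G_1$ and once to $G_2$, and then subtract. The whole point is that these two graft products share everything except the roots chosen in $H$: both are built from the same factor $H$ and from $r$ identical copies of $K$, with the same weights $\alpha$ and $\beta$, hence with the same order $|V|=|V_H|+r|V_K|-r$ and the same total weight $W$. So every quantity in Theorem \ref{basic-theo} that depends only on the multiset of factors (and not on the grafting positions) will be common to $G_1$ and $G_2$ and will cancel in the difference.

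First I would specialize the master formula to $G_1$. Since each factor equals $K$ with weight $\beta$, we have $|V_i|=|V_K|$ and $B_i=B$ for all $i$, so the auxiliary weights collapse to a single $\xi_i=\xi=(|V_K|-1)\alpha+B$ and a single $\eta_i=\eta=(|V|-|V_K|)\beta+W-B$, both independent of $i$; moreover every $y_i=y$. Theorem \ref{basic-theo} then gives
$$M_{G_1}^{\gamma}=M_H^{\alpha}+rM_K^{\beta}+\sum_{i=1}^r M_H^{\xi}(x_i)+rM_K^{\eta}(y)+(|V_K|-1)B\sum_{i,j=1}^r\dist(x_i,x_j).$$
Applying the same formula to $G_2$, the only change is that every root in $H$ is the single vertex $x$. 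As $|V|$, $W$, $|V_K|$, and $B$ are unchanged, the scalars $\xi$ and $\eta$ are identical, and the purely $K$-dependent terms $M_K^{\beta}$ and $M_K^{\eta}(y)$ carry over verbatim. Replacing each $x_i$ by $x$ turns the $H$-moment sum into $rM_H^{\xi}(x)$, while the cross term disappears because $\dist(x,x)=0$ forces $\sum_{i,j=1}^r\dist(x,x)=0$. Hence
$$M_{G_2}^{\gamma}=M_H^{\alpha}+rM_K^{\beta}+rM_H^{\xi}(x)+rM_K^{\eta}(y).$$

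Finally, subtracting the two displays cancels every term in $M_H^{\alpha}$, $M_K^{\beta}$, and $M_K^{\eta}(y)$, leaving
$$M_{G_2}^{\gamma}-M_{G_1}^{\gamma}=rM_H^{\xi}(x)-\sum_{i=1}^r M_H^{\xi}(x_i)-(|V_K|-1)B\sum_{i,j=1}^r\dist(x_i,x_j),$$
and rewriting $rM_H^{\xi}(x)=\sum_{i=1}^r M_H^{\xi}(x)$ produces the stated identity. Being a direct corollary, there is no genuine obstacle beyond bookkeeping; the one point that truly deserves care is checking that $\xi$ and especially $\eta$ are constant in $i$ and \emph{unchanged} between $G_1$ and $G_2$, since it is exactly this that makes all the $K$-contributions cancel. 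This in turn rests on the fact that the factors and their weights are identical and that the global parameters $|V|$ and $W$ depend only on the collection of factors, not on where they are grafted onto $H$.
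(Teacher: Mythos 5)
Your proposal is correct and follows essentially the same route as the paper: specialize Theorem \ref{basic-theo} to $G_1$ (all factors equal to $K$, so $\xi_i=\xi$ and $\eta_i=\eta$ are constant), then obtain $M_{G_2}^{\gamma}$ from the case $x_1=\cdots=x_r=x$, where the cross term vanishes since $\dist(x,x)=0$, and subtract. Your explicit $\eta=(|V|-|V_K|)\beta+W-B$ is in fact cleaner than the paper's printed version of $\eta$, which contains apparent typographical slips, and your emphasis that $\xi$ and $\eta$ are unchanged between $G_1$ and $G_2$ is exactly the cancellation the paper uses implicitly.
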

\begin{proof}
By Theorem \ref{basic-theo}, we get
\begin{equation}
\label{comparison1}
  M_{G_1}^{\gamma}=M_H^{\alpha}+rM_{K}^{\beta}+\sum_{i=1}^r M_H^{\xi}(x_i)+rM_K^{\eta}(y)+B(|V_K|-1)\sum_{i,j=1}^r \dist(x_i,x_j)
\end{equation}
with $\xi=(|V_K|-1)\alpha+B$ and $\eta=|V|+(r-1)B+(|V|+(r-1)|V_K|-r)\beta$.

The moment of $G_2$ is  obtained by considering the case $x_1=\cdots = x_r=x$:
\begin{equation}
\label{comparison2}
M_{G_2}^{\gamma}=M_H^{\alpha}+rM_{K}^{\beta}+r M_H^{\xi}(x)+rM_K^{\eta}(y).
\end{equation}
Then, the result follows from (\ref{comparison1}) and (\ref{comparison2}).
\end{proof}

Then, {\em the variation of the moment caused by $K$ only depends on its order and its total weight} (and neither on its weight function $\beta$ nor on its structure). By way of example, consider the graphs $K$ and $K'$, on four vertices and common weight $B=15$, depicted in Figure \ref{fig4-moments}. Their corresponding weight functions, as well as the graph $H$, have been arbitrarily chosen. We connect two copies of $K$ (respectively, $K'$) to $H$. First to the extreme vertices $x_1,x_2$, and then to the central vertex $x$. Then, $\sum_{i,j=1}^r \dist(x_i,x_j)=4$ and $\xi=3\alpha+15$.
Then, from the above comment, we see that moment differences coincide:
\begin{eqnarray*}
M_{R_2}^{\gamma}-M_{R_1}^{\gamma} & = & M_{S_2}^{\gamma'}-M_{S_1}^{\gamma'} \\
& = & 2M_H^{\xi}(x)-M_H^{\xi}(x_1)-M_H^{\xi}(x_2)-3\cdot 15\cdot 4= 72-126-180\\
&=&-234.
\end{eqnarray*}

\section{Trees and cycles}
In this section we consider a slight generalization of the graft product, together with its corresponding result for computing its moment, which leads to a more symmetric and compact presentation of Theorem \ref{basic-theo}. The proof is based on the fact that the reasoning given before allows the `receptor' vertices not to be necessarily different. Then, we only need  to translate the result to the new notation.

Let us consider a connected graph $H$ and a finite family $\F$ of disjoint connected graphs $K$. Fix one vertex $y_K\in V_K$ for each $K\in \F$ and consider a map $\F\rightarrow V_H$ defined by $K\mapsto x_K$. Let $\F_x$ be the anti-image of $x$ (that could be void). Then, the graft product $G=(V,E)$ is constructed by joining the graphs in $\F$ to $H$ by identifying each vertex $x\in V_H$ with the vertex $y_K$ of each $K\in\F_x$. This graph $K$, which shares vertex $x$ with $V_H$, has $n_x=\sum_{K\in\F_x} |V_k|-\F_x+1$ vertices. In particular, notice that if $\F_x=\emptyset$, then $n_x=1$. Also $|V|=\sum_{x\in V_H} n_x$. Let $\alpha$ and $\beta_K$, for each $K\in\F$, be some weight functions defined on the vertices of $H$ and $K$, respectively. The weight of $H$ and $K$ are denoted, respectively, by $A$ and $B_K$. The weight of the graphs attached to $x$ is then $w_x=\sum_{K\in\F_x} B_K$ and the total weight of $G$ is $W=A+\sum_{K\in\F} B_K$. On $G$ we consider the weight function
$\gamma=\alpha_H+\sum_{K\in\F}\beta_K$. Note that if $\alpha$ and $\beta_K$ are degree functions, then $\gamma$ also is.

Let  $\j,\n,\w$ be the (column) vectors with components $1$, $n_x$, and $w_x$, $x\in V_H$, respectively. Moreover, let $\D$ be the distance matrix with entries $(\D)_{xx'}=\dist(x,x')$ for every $x,x'\in V_H$.
Then, Theorem \ref{basic-theo} reads as follows:
\begin{theo}
\label{teorema4.1}
The moment of the graft product $G$ defined above is
\begin{equation}\label{basic-theo(b)}
 M_{G}^{\gamma}  =  M_H^{\alpha}+\sum_{K\in\F} M_{K}^{\beta_K}+\sum_{x\in V_H} M_{H}^{\xi_x}(x)+\sum_{K\in \F} M_{K}^{\eta_K}(y_K)
              +  (\n-\j)^{\top}\D\w,
\end{equation}
where $\xi_x=(n_x-1)\alpha+w_x$ and $\eta_K=(|V|-|V_K|)\beta_K+W-B_K$.
\end{theo}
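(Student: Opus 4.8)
The plan is to leverage Theorem~\ref{basic-theo} almost directly: its three–step proof nowhere uses that the receptor vertices $x_1,\dots,x_r$ are pairwise distinct, so the identical computation applies verbatim to the present, more general graft product. The only point that genuinely needs checking is that the distance decompositions driving that proof survive when several graphs are grafted at a common vertex. This is indeed the case: each $K\in\F$ meets the rest of $G$ only in its root $y_K$ (identified with $x_K\in V_H$), so $x_K$ is a cut vertex and every shortest path leaving $K$ factors through it. Hence, for $w\in K$ and $w'\in K'$ with $K\in\F_x$, $K'\in\F_{x'}$, one still has $\dist(w,w')=\dist(w,x)+\dist(x,x')+\dist(x',w')$, which for $x=x'$ correctly collapses to $\dist(w,w')=\dist(w,x)+\dist(x,w')$ since $\dist(x,x')=0$. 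With this granted, I would reindex the formula of Theorem~\ref{basic-theo} by the family $\F$ (writing $K$ for the index $i$, and $x_K,y_K,|V_K|,\beta_K,B_K$ for $x_i,y_i,|V_i|,\beta_i,B_i$) and then regroup the graph–indexed sums according to the vertex of $V_H$ at which each graph is attached.

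Carrying out the translation term by term, the summands $M_H^\alpha+\sum_{K\in\F}M_K^{\beta_K}$ and $\sum_{K\in\F}M_K^{\eta_K}(y_K)$, with $\eta_K=(|V|-|V_K|)\beta_K+W-B_K$, are immediate relabelings of the corresponding terms of Theorem~\ref{basic-theo}. For the third summand I would exploit that $M_H^{\rho}(x)$ is linear in the weight $\rho$, so that for $K\in\F_x$,
\begin{equation*}
M_H^{(|V_K|-1)\alpha+B_K}(x)=(|V_K|-1)\,M_H^{\alpha}(x)+B_K\,M_H^{1}(x).
\end{equation*}
Summing over all $K\in\F$ and collecting the graphs grafted at a fixed $x$ gives
\begin{equation*}
\sum_{K\in\F}M_H^{\xi_K}(x_K)=\sum_{x\in V_H}\Big[\Big(\sum_{K\in\F_x}(|V_K|-1)\Big)M_H^{\alpha}(x)+\Big(\sum_{K\in\F_x}B_K\Big)M_H^{1}(x)\Big].
\end{equation*}
The definitions $n_x=\sum_{K\in\F_x}|V_K|-|\F_x|+1$ and $w_x=\sum_{K\in\F_x}B_K$ give $\sum_{K\in\F_x}(|V_K|-1)=n_x-1$ while the second inner sum is $w_x$, so the bracket equals $M_H^{(n_x-1)\alpha+w_x}(x)=M_H^{\xi_x}(x)$, recovering $\sum_{x\in V_H}M_H^{\xi_x}(x)$.

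It then remains to identify the last summand. Reindexing the double sum $\sum_{i,j}(|V_i|-1)\dist(x_i,x_j)B_j$ over pairs $K,K'\in\F$ and again grouping by the grafting vertices $x=x_K$, $x'=x_{K'}$ yields
\begin{equation*}
\sum_{K,K'\in\F}(|V_K|-1)\dist(x_K,x_{K'})B_{K'}=\sum_{x,x'\in V_H}(n_x-1)\,\dist(x,x')\,w_{x'},
\end{equation*}
using the same two identities as above. Since $(\n-\j)_x=n_x-1$, $(\D)_{xx'}=\dist(x,x')$, and $\w_{x'}=w_{x'}$, the right-hand side is exactly the bilinear form $(\n-\j)^{\top}\D\w$. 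Assembling the five translated terms gives~(\ref{basic-theo(b)}). I expect no serious obstacle here: the real content is the justification in the first paragraph that coincident receptors leave intact the cut-vertex distance additivity underlying Theorem~\ref{basic-theo}; once that is secured, the argument reduces to the routine bookkeeping of re-summing graph-indexed quantities as vertex-indexed ones.
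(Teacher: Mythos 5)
Your proposal is correct and follows essentially the same route as the paper, which proves Theorem~\ref{teorema4.1} precisely by observing that the proof of Theorem~\ref{basic-theo} never uses distinctness of the receptor vertices and then translating to the new notation. Your verification of the cut-vertex distance additivity when receptors coincide, and the explicit regrouping identities $\sum_{K\in\F_x}(|V_K|-1)=n_x-1$, $\sum_{K\in\F_x}B_K=w_x$ yielding $\sum_{x\in V_H}M_H^{\xi_x}(x)$ and $(\n-\j)^{\top}\D\w$, simply fill in bookkeeping that the paper leaves implicit.
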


\subsection{Unicyclic graphs}
Besides trees (see, for instance,
Dobrynin, Entringer, and  Gutman \cite{deg01}), unicyclic graphs have deserved a special attention in our context. For instance, A.I. Tomescu \cite{t08} gave lower bounds for the degree distance of (connected) unicyclic (and bicyclic) graphs, and characterized the extremal cases. (His result was generalized by Bucicovschi and Cioab\u{a} \cite{bc08} for connected graphs of given numbers of vertices and edges.)

Let $H=C_r$ be the cycle with vertices $x_1,x_2,\ldots,x_r$, and let $K_i=T_i$ be a tree on $n_i=|V_i|$ vertices, $i=1,\ldots,r$. With $y_i\in V_i$ for $i=1,\ldots,r$, consider the {\em unicyclic graph}
$$
G=C_r\left(
\begin{array}{ccc}
x_1 & \cdots & x_r \\
y_1 & \cdots & y_r
\end{array}
\right)(T_1,\ldots,T_r).
$$
To derive the moment of $G$, we need a simple lemma whose proof is immediate if we distinguish the cases of even and odd $r$.
\begin{lema}
\label{lema}
Let $C_r$ be the cycle with vertices $x_1,\ldots,x_r$. The distance matrix $\D_r$ with entries $(\D_r)_{ij}=\dist(x_i,x_j)$ has maximum eigenvalue $\theta_r=\lfloor\frac{r}{2}\rfloor\lfloor\frac{r+1}{2}\rfloor$ with (unique) associated eigenvector $\j$.
\end{lema}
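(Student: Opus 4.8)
The plan is to exploit the circulant structure of the distance matrix. Since the distance in $C_r$ between $x_i$ and $x_j$ depends only on $i-j\pmod r$, namely $\dist(x_i,x_j)=\min(|i-j|,r-|i-j|)$, the matrix $\D_r$ is a symmetric circulant whose first row consists of the entries $c_\ell=\min(\ell,r-\ell)$ for $\ell=0,1,\dots,r-1$. First I would invoke the standard diagonalization of circulant matrices: the vectors $\v_k=(1,\omega^k,\omega^{2k},\dots,\omega^{(r-1)k})^{\top}$ with $\omega=e^{2\pi i/r}$ form a common (orthogonal) eigenbasis, and the corresponding eigenvalue is $\lambda_k=\sum_{\ell=0}^{r-1}c_\ell\,\omega^{k\ell}$. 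Because $c_\ell=c_{r-\ell}$, each $\lambda_k$ is real and equals $\sum_{\ell=0}^{r-1}c_\ell\cos(2\pi k\ell/r)$. Note that $\v_0=\j$, so $\j$ is automatically an eigenvector, with eigenvalue $\lambda_0=\sum_{\ell=0}^{r-1}c_\ell$.

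The next step is the elementary computation of $\lambda_0$, splitting into the parity cases suggested in the statement. For $r=2m$ the nonzero distances from a fixed vertex are $1,2,\dots,m-1,m,m-1,\dots,1$, summing to $m^2$; for $r=2m+1$ they are $1,2,\dots,m,m,\dots,1$, summing to $m(m+1)$. In both cases this equals $\lfloor r/2\rfloor\lfloor(r+1)/2\rfloor=\theta_r$, so $\D_r\j=\theta_r\j$.

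It remains to show that $\theta_r$ is the \emph{maximum} eigenvalue and that $\j$ is its \emph{unique} associated eigenvector. Here I would use the bound $\lambda_k=\sum_\ell c_\ell\cos(2\pi k\ell/r)\le\sum_\ell c_\ell=\lambda_0$, valid since every $c_\ell\ge0$; equality forces $\cos(2\pi k\ell/r)=1$ for every $\ell$ with $c_\ell>0$ (that is, for all $\ell\neq0$), which happens only when $k\equiv0\pmod r$. Thus $\lambda_0=\theta_r$ is strictly larger than every other $\lambda_k$, so it is the largest eigenvalue and is simple, whence $\j$ spans its eigenspace. Equivalently, since $\D_r$ is a nonnegative, symmetric, irreducible matrix---all off-diagonal entries being positive---Perron--Frobenius guarantees that its spectral radius is a simple eigenvalue admitting the only positive eigenvector; as $\j>0$ is an eigenvector, it must be the Perron eigenvector and $\theta_r$ the spectral radius.

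I expect the only real content to be this maximality-and-simplicity step: computing $\lambda_0$ is routine arithmetic, but verifying that no other circulant eigenvalue reaches $\theta_r$ is what actually pins down both claims of the lemma. The cosine bound handles this cleanly, with Perron--Frobenius available as an even shorter alternative.
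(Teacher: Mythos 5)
Your proof is correct, and it is considerably more explicit than what the paper offers: the authors give no written argument at all, stating only that the proof ``is immediate if we distinguish the cases of even and odd $r$.'' Their intended reasoning is evidently the lightweight version you mention at the end: since $C_r$ is vertex-transitive, all row sums of $\D_r$ are equal, so $\j$ is an eigenvector whose eigenvalue is the common row sum, computed by the parity split to be $m^2$ for $r=2m$ and $m(m+1)$ for $r=2m+1$, i.e.\ $\theta_r=\lfloor\frac{r}{2}\rfloor\lfloor\frac{r+1}{2}\rfloor$; maximality and simplicity then come for free from Perron--Frobenius, since $\D_r$ is nonnegative, symmetric and irreducible (all off-diagonal entries positive for $r\ge 2$), so the spectral radius is simple with the unique positive eigenvector, which must be $\j$. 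Your main route via the circulant diagonalization proves strictly more --- it exhibits the entire spectrum $\lambda_k=\sum_{\ell}c_\ell\cos(2\pi k\ell/r)$ and shows the strict dominance $\lambda_k<\lambda_0$ for $k\not\equiv 0$ directly by the cosine bound (taking $\ell=1$ in the equality analysis correctly forces $k\equiv 0 \pmod r$) --- at the cost of invoking the circulant eigenbasis, which is machinery the one-line Perron--Frobenius argument avoids. Both the row-sum computation and the dominance step in your write-up are sound; the only cosmetic caveats are that ``unique eigenvector'' should be read as unique up to scalar multiples, and that the degenerate cases $r=1,2$ (where the cosine argument trivializes or the distances collapse) are easily checked by hand, as your computation of $\theta_1=0$ and $\theta_2=1$ confirms.
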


Then, Theorem \ref{teorema4.1} yields the following result:

\begin{propo}
The moment with respect to the degree function $M_G^{\delta}$ (or degree distance $D'(G)$) of the unicyclic  graph, constructed by adding to the cycle $C_r$, according to the mapping $\F\rightarrow V_{C_r}$, the trees of a forest $\F$ thorough the vertices $\{y_K\}_{K\in\F}$, is
\begin{equation}
\label{moment-unicyclic}
M_{G}^{\delta}  =  \sum_{T\in \F} M_{T}^{\delta}+\sum_{T\in\F}^r M_{T}^{\eta_T}(y_T)
              +  2\n^{\top}\D\n,
\end{equation}
where $\eta_T=(|V|-|V_T|)(\delta+2)+2$.
\end{propo}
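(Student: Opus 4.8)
The plan is to obtain the formula as a direct specialization of Theorem~\ref{teorema4.1} to the case $H=C_r$, $\alpha=\delta_{C_r}$ and $\beta_K=\delta_K$ for every $K\in\F$, so that the induced weight $\gamma$ is the degree function of $G$. First I would record the three numerical facts that make the specialization collapse: (a) since $C_r$ is $2$-regular, $\alpha\equiv 2$ is constant, hence $M_{C_r}^{\alpha}=2\,\j^{\top}\D\j$ and $M_{C_r}^{\alpha}(x)=2(\D\j)_x$; (b) each $K\in\F$ is a tree, so $B_K=\sum_{v\in V_K}\delta_K(v)=2(|V_K|-1)$, whence the attached-weight vector satisfies $\w=2(\n-\j)$; and (c) $G$ is connected and unicyclic, so it has exactly $|V|$ edges and therefore total weight $W=\sum_{v}\delta_G(v)=2|V|$.

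Second, I would simplify the two ``rooted'' sums. For the $\eta_K$ term, using $W-B_K=2|V|-2(|V_K|-1)=2(|V|-|V_K|)+2$ together with $\beta_K=\delta_K$ gives $\eta_K=(|V|-|V_K|)\delta_K+\bigl(2(|V|-|V_K|)+2\bigr)=(|V|-|V_K|)(\delta_K+2)+2$, which is exactly the claimed $\eta_T$; thus $\sum_{K\in\F}M_K^{\eta_K}(y_K)=\sum_{T\in\F}M_T^{\eta_T}(y_T)$, while $\sum_{K\in\F}M_K^{\beta_K}=\sum_{T\in\F}M_T^{\delta}$ is already in final form. For the $\xi_x$ term, $\alpha\equiv 2$ forces $\xi_x$ to be the constant weight $2(n_x-1)+w_x=4(n_x-1)$ (using $w_x=2(n_x-1)$ from (b)), so by linearity of the moment $\sum_{x\in V_H}M_H^{\xi_x}(x)=4\sum_x(n_x-1)(\D\j)_x=4(\n-\j)^{\top}\D\j$.

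Third comes the one genuinely computational step: collecting the three terms of Theorem~\ref{teorema4.1} that carry the distance matrix of the cycle, namely $M_H^{\alpha}=2\,\j^{\top}\D\j$, the $\xi$-sum $4(\n-\j)^{\top}\D\j$, and the last term $(\n-\j)^{\top}\D\w=2(\n-\j)^{\top}\D(\n-\j)$ (by (b)). Writing $\m=\n-\j$ and using that $\D$ is symmetric, these sum to $2\,\j^{\top}\D\j+4\,\m^{\top}\D\j+2\,\m^{\top}\D\m$, which is precisely the expansion of $2(\m+\j)^{\top}\D(\m+\j)=2\,\n^{\top}\D\n$. Substituting back yields the stated identity. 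The main point to be careful about is the bookkeeping of these bilinear forms, in particular that every pure-$\j$ and every mixed $\j$--$\m$ contribution cancels exactly, together with the verification of the two scaling identities $\w=2(\n-\j)$ and $W=2|V|$ on which the whole collapse rests; everything else is substitution. Lemma~\ref{lema} is not required for this compact form, but its content $\D\j=\theta_r\j$ (the equal row sums of the circulant distance matrix of $C_r$) is exactly what later lets one evaluate the surviving quadratic form $\n^{\top}\D\n$ explicitly in symmetric situations, for instance when all cluster sizes $n_x$ coincide.
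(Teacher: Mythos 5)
Your proof is correct, and it follows the paper's overall skeleton --- a direct specialization of Theorem~\ref{teorema4.1} with $\alpha=\beta_K=\delta$, the identities $B_K=2(|V_K|-1)$, $\w=2(\n-\j)$, $W=2|V|$, and the resulting simplification $\eta_T=(|V|-|V_T|)(\delta+2)+2$ --- but it handles the key cancellation step differently, and arguably better. The paper evaluates each of the three distance-matrix terms separately through Lemma~\ref{lema}: it invokes $\D\j=\theta_r\j$ to get $M_{C_r}^{\delta}=2r\theta_r$ in (\ref{(9)}), $\sum_x M_{C_r}^{\xi_x}(x)=4\theta_r(|V|-r)$ in (\ref{(10)}), and to reduce the cross terms of $2(\n-\j)^{\top}\D(\n-\j)$ in (\ref{(11)}), after which the multiples of $\theta_r$ cancel numerically. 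You instead observe that the three terms $2\,\j^{\top}\D\j+4(\n-\j)^{\top}\D\j+2(\n-\j)^{\top}\D(\n-\j)$ are exactly the expansion of $2\,\n^{\top}\D\n$ by symmetry of $\D$ alone --- a completion of the square that makes the collapse purely formal, never using the constant row sums (equivalently, the eigenvector property) of the cycle's distance matrix. What your route buys is robustness and transparency: the identity $2\n^{\top}\D\n$ is seen to depend only on the $2$-regularity of $H$ (which makes $\alpha$ constant and $\xi_x=4(n_x-1)$) and on $\D$ being symmetric, so no case analysis on the parity of $r$ and no spectral input is needed; your closing remark correctly relocates Lemma~\ref{lema} to where it genuinely matters, namely the later explicit evaluation of the surviving quadratic form. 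What the paper's route buys is that the intermediate quantities are already expressed in terms of $\theta_r$, which is convenient for the subsequent propositions on extended cycles where $\theta_r$ appears explicitly. All your subsidiary computations ($W=2|V|$ from unicyclicity, the $\eta_T$ rewriting, and the constancy of $\xi_x$) check out against the paper's.
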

\begin{proof}
Let us compute (\ref{basic-theo(b)}) in our particular case:
\begin{equation}\label{(9)}
 M_{C_r}^{\delta}=\sum_{x,y\in V_{C_r}} \delta(x)\dist(x,y)=2r\sum_{y\in V_{C_r}} \dist(x,y)=2r\theta_r.
\end{equation}
For each tree, the total degree weight (sum of degrees) is twice its number of vertices minus two. Therefore,
$\xi_x=(n_x-1)\alpha + w_x = 2(n_x-1)+2(n_x-1)=4(n_x-1)$. Then,
\begin{equation}\label{(10)}
\sum_{x\in C_r} M_{C_r}^{\xi_x}(x)=4\sum_{x\in C_r}(n_x-1) \sum_{y\in C_r}\dist(x,y)
=4\theta_r\sum_{x\in C_r}(n_x-1)= 4\theta_r(|V|-r).
\end{equation}
From the degree weight of a tree and Lemma \ref{lema}, the last term in  (\ref{basic-theo(b)}) is:
\begin{eqnarray}
 2(\n-\j)^{\top}\D(\n-\j) &=& 2\n^{\top}\D\n-4 \n^{\top}\D\j+2\j^{\top}\D\j \nonumber\\
  & = &  2\n^{\top}\D\n -4 \theta_r\n^{\top} \j+2\theta_r\j^{\top}\j\nonumber \\
  & = & 2\n^{\top}\D\n-4\theta_r|V|+2r\theta_r. \label{(11)}
\end{eqnarray}
By adding up (\ref{(9)}), (\ref{(10)}) and (\ref{(11)}) we get the term $2\n^{\top}\D\n$ in (\ref{moment-unicyclic}).
Finally, since $W=2r+\sum_{T\in\F}2(|V_T|-1)=2r+2\sum_{x\in V_{C_r}} (n_x-1)=2r + 2(|V|-r)=2|V|$, we obtain
$$
\eta_T=(|V|-|V_T|)\delta+2|V|-2(|V_T|-1)=(|V|-|V_T|)(\delta+2)+2,
$$
which completes the proof.
\end{proof}

\subsection{Extended cycles}
We call {\rm extended cycles} the family of ordinary cycles
($r> 2$), edges ($r=2$) and singletons ($r=1$). Thus, an extended cycle of $r$ vertices has $r$, $1$ or $0$ edges, and degree $2$, $1$ or $0$, respectively, depending on the case and, by Lemma \ref{lema}, its distance matrix has maximum eigenvalue $\theta_r=\lfloor\frac{r}{2}\rfloor\lfloor\frac{r+1}{2}\rfloor$ for $r\ge 2$ and $\theta_1=0$.

When we consider the graft product of extended cycles, we obtain the following result:

\begin{propo}
Let $C$ be an extended cycle on $r=|V_C|$ vertices. For each vertex $x\in V_C$, consider an extended cycle $C_x$ with $r_x=|V_x|$ vertices and $m_x$ edges. Let $G=(V,E)$ be the graft product obtained by amalgamating each vertex $x$ with a vertex $y_x\in V_x$.  Then, the moment of $G$ with respect to the degree function (or degree distance) is:
\begin{equation}
\label{M-cycles}
  M_G^{\delta}=2\left(m[\vectheta]+[\m][\vectheta]-\langle \m,\vectheta \rangle\right)+(2\theta+\langle \vecdelta,\vectheta\rangle)[\r]+2\r^{\top}\D\m,
\end{equation}
where $m=|E|$, $\D$ is the distance matrix of $C$, $\r,\m,\vectheta,\vecdelta$ are, respectively, the vectors with components $r_x,m_x,\theta_x(=\theta_{r_x}),\delta_x$ for $x\in V_C$, and $[\v]=\langle \v,\j\rangle$.
\end{propo}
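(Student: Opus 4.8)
The plan is to read the statement off the compact graft-product formula (\ref{basic-theo(b)}) of Theorem \ref{teorema4.1}, specialized to the present situation: take $H=C$ with $\alpha$ the degree function $\delta$ of $C$, and, for each $x\in V_C$, let the attached member $C_x$ of $\F$ carry $\beta_{C_x}=\delta$, its own degree function; then the induced weight $\gamma$ on $G$ is exactly $\delta$, so $M_G^{\gamma}=M_G^{\delta}$. First I would record the combinatorial data forced by gluing exactly one extended cycle at each vertex: $n_x=r_x$ (so $\n=\r$), while the handshake lemma gives $w_x=2m_x$ (so $\w=2\m$), $A=2m$, $B=2[\m]$, and $W=A+B=2(m+[\m])$; finally $|V|=\sum_{x\in V_C}n_x=[\r]$.

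The engine of the evaluation is Lemma \ref{lema} together with the regularity of extended cycles. Since the distance matrix of an extended cycle $K$ on $k$ vertices satisfies $\D_K\j=\theta_k\j$, every row sum equals $\theta_k$, i.e. $\sum_{v}\dist(v,u)=\theta_k$ for each $u$; and because $K$ is regular of degree $d_K=2m_K/k$, its degree vector is $d_K\j$. These two facts give the evaluations I will use throughout: $M_K^{\delta}(u)=d_K\theta_k$ independently of $u$, whence $M_K^{\delta}=2m_K\theta_k$, and $M_K^{c}(u)=c\,\theta_k$ for any constant weight $c$. Applying them termwise to (\ref{basic-theo(b)}) yields $M_C^{\delta}=2m\theta$ and $\sum_{x}M_{C_x}^{\delta}=2\langle\m,\vectheta\rangle$; reading off $\xi_x=(r_x-1)\delta+2m_x$ gives $M_C^{\xi_x}(x)=2(r_x-1)\theta+2m_x\theta$ (the base cycle being regular of degree $2$); reading off $\eta_x=([\r]-r_x)\delta+(W-2m_x)$ gives $M_{C_x}^{\eta_x}(y_x)=([\r]-r_x)\delta_x\theta_x+(W-2m_x)\theta_x$, where $\delta_x=d_{C_x}$ is the common degree of $C_x$; and the matrix term is $(\n-\j)^{\top}\D\w=2\r^{\top}\D\m-2\theta[\m]$, using $\j^{\top}\D=\theta\j^{\top}$.

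The final step is to sum the five contributions and simplify. I expect the two occurrences $\pm2\theta[\m]$ (from the $\xi$-term and the matrix term) to cancel; the scalar multiples of $\theta$, namely $2m\theta+2\theta([\r]-r)$, to collapse to $2\theta[\r]$ since $m=r$ for the base cycle; the contributions to $\langle\m,\vectheta\rangle$ (a $+2$ from $\sum_{x}M_{C_x}^{\delta}$ and, via $r_x\delta_x=2m_x$, a $-4$ from the $\eta$-term) to combine into $-2\langle\m,\vectheta\rangle$; and the $\eta$-term to also produce $W[\vectheta]=2m[\vectheta]+2[\m][\vectheta]$ together with $[\r]\sum_{x}\delta_x\theta_x=[\r]\langle\vecdelta,\vectheta\rangle$. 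Collecting everything leaves precisely $2(m[\vectheta]+[\m][\vectheta]-\langle\m,\vectheta\rangle)+(2\theta+\langle\vecdelta,\vectheta\rangle)[\r]+2\r^{\top}\D\m$, as claimed. The part I expect to be most delicate is the bookkeeping of the two root-moment terms $M_C^{\xi_x}(x)$ and $M_{C_x}^{\eta_x}(y_x)$ and the tracking of which pieces cancel: in particular, recognizing that the base-degree contributions assemble into the clean $2\theta[\r]$ while the attached-degree contributions assemble into $\langle\vecdelta,\vectheta\rangle[\r]$, so that $\vecdelta$ must be read as the vector of common degrees of the attached extended cycles.
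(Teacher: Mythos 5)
Your proof is correct and follows essentially the same route as the paper: specialize Theorem \ref{teorema4.1} with $\alpha$ and each $\beta_{C_x}$ equal to the degree function, use Lemma \ref{lema} (every row sum of the distance matrix of an extended cycle equals $\theta_k$, i.e.\ $\D\j=\theta\j$) together with regularity to evaluate the five terms of (\ref{basic-theo(b)}), and add them up with exactly the cancellations you list. One remark: your simplifications $\delta=2$ and $m=r$ for the base hold only for a proper cycle ($r\ge 3$), whereas the paper's intermediate steps keep the base degree generic (producing $\delta\theta[\r]$ rather than $2\theta[\r]$); but since the paper's final formula (\ref{M-cycles}) makes that same implicit specialization, your derivation matches the stated result term for term.
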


\begin{proof}
Let us compute the different terms of the expression of $M_G^{\delta}$ given by Theorem \ref{basic-theo}:
\begin{eqnarray}
M_C^{\delta} &\!\!\!=\!\!\!& \sum_{x\in V_C}\delta\sum_{y\in V_C}\dist(x,y)=\sum_{x\in V_C}\delta\theta=r\delta\theta. \label{(12)} \\
M_{C_x}^{\delta}  &\!\!\!=\!\!\!&  \sum_{y\in V_x}\delta_x\sum_{z\in V_x}\dist(y,z)=\sum_{y\in V_x}\delta_x\theta_x=r_x\delta_x\theta_x=2m_x\theta_x, \nonumber \\
\sum_{x\in V_C} M_{C_x}^{\delta} &\!\!\!=\!\!\!& 2 \sum_{x\in V_C}m_x\theta_x=2\langle \m,\vectheta\rangle. \label{(13)}\\
\xi_x &\!\!\!=\!\!\!& (|V_x|-1)\delta+B_x=(r_x-1)\delta+2m_x, \nonumber\\
M_C^{\xi_x} &\!\!\!=\!\!\!& \sum_{y\in V_C} [(r_x-1)\delta+2m_x]\dist(x,y)=[(r_x-1)\delta+2m_x]\theta,\nonumber\\
\sum_{x\in V_C} M_C^{\xi_x} &\!\!\!=\!\!\!& \delta\theta\sum_{x\in V_C}r_x-r\delta\theta + 2\theta \sum_{x\in V_C} m_x=\delta\theta[\r]-r\delta \theta+2\theta[\m]. \label{(14)}\\
\eta_x &\!\!\!=\!\!\!& (|V|-|V_x|)\beta_x + W -B_x \nonumber\\
 &\!\!\!=\!\!\!& \left(\sum_{y\in V_C}r_y-r_x\right)\delta_x+2m+2\sum_{y\in V_C}m_y-2m_x\nonumber \\
&\!\!\!=\!\!\!& \delta_x[\r]-4m_x+2m+2[\m],\nonumber \\
 M_{C_x}^{\eta_x}(y_k) &\!\!\!=\!\!\!&  \delta_x\theta_x[\r]+2m\theta_x-4m_x\theta_x+2[\m]\theta_x, \nonumber \\
\sum_{x\in V_C} M_{C_x}^{\eta_x}(y_k) &\!\!\!=\!\!\!& \langle\vecdelta,\vectheta_x\rangle[\r]+2m[\vectheta]-4\langle \m,\vectheta\rangle+2[\m][\vectheta]. \label{(15)}\\
2(\r-\j)^{\top}\D\m &\!\!\!=\!\!\!& 2\r^{\top}\D\m-2\theta[\m] \label{(16)}.
\end{eqnarray}
Then, the result follows by adding expressions from
$(\ref{(12)})$ to $(\ref{(16)})$.
\end{proof}

In the case when $C$ and $C_x$ are proper cycles $(r\ge 3, r_x\ge 3)$ for all $x\in V_C$, we get the following:

\begin{propo}
The degree distance $D'(G)=M_G^{\delta}$ of the graft product of cycles $C$ and $C_x$, $x\in V_C$, is
$$
M_G^{\delta}=4\left(\sum_{x\in V_C}r_x\right)\left(\sum_{x\in V_C}\theta_x\right)+2\left(\theta\sum_{x\in V_C}r_x+r\sum_{x\in V_C}\theta_x-\sum_{x\in V_C}r_x\theta_x\right)+2\r^{\top}\D\r.
$$
\end{propo}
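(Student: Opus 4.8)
The plan is to specialize the formula (\ref{M-cycles}) of the preceding proposition to the case in which every extended cycle involved is a proper cycle. The only structural input needed is the elementary fact that a cycle $C_s$ on $s\ge 3$ vertices is $2$-regular and has exactly $s$ edges. Applied to the base cycle $C$ this gives $\delta=2$ and $m=r$; applied to each attached cycle $C_x$ it gives $\delta_x=2$ and $m_x=r_x$ for every $x\in V_C$. In the vector notation of the statement these identities read $\vecdelta=2\j$ and $\m=\r$, while the two scalar quantities attached to $C$ become $\delta=2$ and $m=r$.

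Next I would substitute these relations term by term into (\ref{M-cycles}). The inner product $\langle\vecdelta,\vectheta\rangle$ collapses to $2[\vectheta]$; the factor $\langle\m,\vectheta\rangle$ becomes $\langle\r,\vectheta\rangle$, the bracket $[\m]$ becomes $[\r]$, and the bilinear term $2\r^{\top}\D\m$ becomes $2\r^{\top}\D\r$. After these replacements the right-hand side reads
$$
M_G^{\delta}=2\bigl(r[\vectheta]+[\r][\vectheta]-\langle\r,\vectheta\rangle\bigr)+\bigl(2\theta+2[\vectheta]\bigr)[\r]+2\r^{\top}\D\r .
$$

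The final step is pure bookkeeping: I would collect like terms. The summand $2[\r][\vectheta]$ arising from the first parenthesis and the summand $2[\vectheta][\r]$ arising from $2[\vectheta]\cdot[\r]$ combine into $4[\r][\vectheta]$, which I then rewrite as $4\bigl(\sum_{x\in V_C}r_x\bigr)\bigl(\sum_{x\in V_C}\theta_x\bigr)$ using $[\r]=\sum_{x\in V_C}r_x$ and $[\vectheta]=\sum_{x\in V_C}\theta_x$. Grouping the three remaining scalar products $\theta[\r]$, $r[\vectheta]$ and $-\langle\r,\vectheta\rangle$ under a common factor of $2$ reproduces exactly the middle term $2\bigl(\theta\sum_{x\in V_C}r_x+r\sum_{x\in V_C}\theta_x-\sum_{x\in V_C}r_x\theta_x\bigr)$ of the claimed identity, while the bilinear term $2\r^{\top}\D\r$ is already in final form.

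I do not expect any genuine obstacle here, since the result is an immediate specialization rather than a new computation. The single point demanding care is tracking that the coefficient of $[\r][\vectheta]$ receives contributions from two distinct sources — namely from $[\m][\vectheta]$ via $\m=\r$ and from $\langle\vecdelta,\vectheta\rangle[\r]$ via $\vecdelta=2\j$ — so that these must be added to yield the coefficient $4$ rather than $2$.
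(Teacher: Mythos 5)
Your proposal is correct and follows exactly the paper's own route: the paper proves this proposition by the one-line observation that $m=r$, $\m=\r$ and $\vecdelta=2\j$ and then specializes formula (\ref{M-cycles}), which is precisely your substitution; you merely spell out the resulting arithmetic (including the correct merging of the two $[\r][\vectheta]$ contributions into the coefficient $4$) that the paper leaves implicit.
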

\begin{proof}
Just observe that, under the hypothesis, $m=r$, $\m=\r$ and $\vecdelta=2\j$.
\end{proof}

\vskip 1cm
\noindent{\large \bf Acknowledgments.}  Research supported by the {\em Ministerio de Ciencia e Innovaci\'on}, Spain, and the {\em European Regional Development Fund} under project MTM2011-28800-C02-01 and by the {\em Catalan Research Council} under project 2009SGR1387.
We thank Prof. J.L.A. Yebra for his useful comments on this paper.



\begin{thebibliography}{99}
%
\bibitem{bcdf09}
L. Barri\`ere, F. Comellas, C. Dalf\'o, and M.A. Fiol, The hierarquical product of graphs, {\em Discrete Appl. Math.}
{\bf 157} (2009), no. 1, 36--48.
%
\bibitem{bdfm09}
L. Barri\`ere, C. Dalf\'o, M.A. Fiol, and M. Mitjana, The generalized hierarquical product of graphs, {\em Discrete Math.}
{\bf 309} (2009) 3872--3881.
%
\bibitem{bc08}
O. Bucicovschi and S.M. Cioab\u{a}, The minimum degree distance of given order and size, {\em Discrete Appl. Math.}
{\bf 156} (2008) 3518--3521.
%
\bibitem{ei11}
M. Eliasi and A. Iranmanesh, The hyper-Winer index of the generalized hierarchical product of graphs, {\em Discrete Appl. Math.} {\bf 159} (2011) 866--871.
%
\bibitem{deg01}
A.A. Dobrynin, R. Entringer, and I. Gutman, Wiener index of trees: theory and
applications, {\em Acta Appl. Math.} {\bf 66} (2001) 211--249.
%
\bibitem{dk94}
A.A. Dobrynin and A.A. Kochetova, Degree distance of a graph: a degree analogue of the Wiener index, {\em J. Chem. Inform. Comput. Sci.} {\bf 34} (1994) 1082--1086.
%
\bibitem{et09}
M. Eliasi and  B. Taeri, Four new sums of graphs and their Wiener indices, {\em Discrete Appl. Math.} {\bf 157} (2009), no. 4, 794--803.
%
\bibitem{GoMK78}
C.D. Godsil and B.D. McKay, A new graph product and its spectrum, {\em Bull. Austral. Math. Soc.} {\bf 18} (1978) 21--28.
%
\bibitem{g94}
I. Gutman, Selected properties of the Schultz molecular topological index, {\em Chem. Inform. Comput.} {\bf 34} (1994) 1087--1089.
%
\bibitem{g02}
I. Gutman, Relation between hyper-Wiener and Wiener index, {\em Chem. Phys. Lett.} {\bf 364} (2002) 352--356.
%
\bibitem{gt80}
I. Gutman and N. Trinajsti\'c, Graph theory and molecular orbitals. Total $\varphi$-electron energy of alternant
hidrocarbons, {\em Chem. Phys. Lett.} {\bf 17} (1972) 535--538.
%
\bibitem{kmpt92}
D.J. Klein, Z. Mihali\'c, D. Plav\u{s}i\'c, and N. Trinajsti\'c, Comparative study of molecular description derived from the distance matrix, {\em J. Chem. Inform. Comput. Sci.} {\bf 32} (1992) 304--305.
%
\bibitem{kya08}
M.H. Khalifeh, H. Yousefi-Azari, and A.R. Ashrafi, The first and second Zagreb indices of some graph operations, {\em
Discrete Appl. Math.} {\bf 157} (2009), no. 4, 804--811.
%
\bibitem{kyaw09}
M.H. Khalifeh, H. Yousefi-Azari, A.R. Ashrafi, and S.G. Wagner, Some new results
on distance-based graph invariants, {\em European J. Combin.} {\bf 30} (2009) 1149--1163.
%
\bibitem{rt93b}
M. Randi\'c, Novel molecular descriptior for structure-property studies, {\em Chem. Phys. Lett.} {\bf 211} (1993) 478--483.
%
\bibitem{rt93}
M. Randi\'c and  N. Trinajsti\'c,  In search for graph invariants of chemical interest, {\em J. Mol. Struct. (THEOCHEM)} {\bf 300} (1993) 551--571.
%
\bibitem{sch74}
A.J. Schwenk, Computing the characteristic polynomial of a graph, {\em Lect. Notes Math.} {\bf 406} (1974) 153--172.
%
\bibitem{sh89}
H.P. Schultz, Topological organic chemistry 1. Graph Theory and Topological Indices of Alkanes, {\em J. Chem. Inf. Comput. Sci.} {\bf 29} (1989) 227--228.
%
\bibitem{s01}
D. Stevanovi\'c, Hosoya polynomial of composite graphs, {\em Discrete Math.} {\bf 235} (2001), no.  1–3,  237--244.
%
\bibitem{t99}
I. Tomescu, Some extremal properties of the degree distance of a graph, {\em Discrete Appl. Math.} {\bf 98} (1999) 159--163.
%
\bibitem{t08}
A.I. Tomescu, Unicyclic and bicyclic graphs having minimum degree distance, {\em Discrete Appl. Math.} {\bf 156} (2008) 125--130.
%
\bibitem{w47}
H. Wiener, Structural determination of paraffin boiling points, {\em J. Am. Chem. Soc.} {\bf 69} (1947) 17--20.
%
\bibitem{yg94}
Y.-N. Yeh and I. Gutman, On the sum of all distances in composite graphs, {\em Discrete
Math.} {\bf 135} (1994) 359--365.
%
\bibitem{z04a}
B. Zhou, Bounds for indices of modified graphs, {\em Discuss. Math. Graph Theory} {\bf 24} (2004) 213--222.
%
\bibitem{z04b}
B. Zhou, Zagreb indices, {\em MATCH Commun. Math. Comput. Chem.} {\bf 52} (2004) 113--118.
%
\bibitem{zg04}
B. Zhou and I. Gutman, Relations between Wiener, hyper-Wiener and Zagreb indices, {\em Chem. Phys. Lett.} {\bf 394} (2004) 93--95.

\end{thebibliography}
\end{document}